\theoremstyle{plain}
\newtheorem{theorem}{Theorem}
\newtheorem{lemma}[theorem]{Lemma}
\newtheorem{conjecture}[theorem]{Conjecture}
\newtheorem{prop}[theorem]{Proposition}
\theoremstyle{definition}
\newtheorem{definition}[theorem]{Definition}
\theoremstyle{remark}
\newtheorem{question}[theorem]{Question}
\numberwithin{equation}{section}
\numberwithin{theorem}{section}
\newcommand{\br}{\overline}
\newcommand{\C}{\mathbb C}
\newcommand{\N}{\mathbb N}
\DeclareMathOperator{\dist}{{\mathrm{dist}}}
\DeclareMathOperator{\diam}{{\mathrm{diam}}}
\DeclareMathOperator{\inter}{{\mathrm{int}}}
\DeclareMathOperator{\md}{\mathrm{Mod}}
\DeclareMathOperator{\area}{\mathrm{Area}}
\DeclareMathOperator{\NED}{\mathit{NED}}
\DeclareMathOperator{\CNED}{\mathit{CNED}}
\begin{document}
\title[Rigidity and continuous extension in circle domains]{Rigidity and continuous extension for conformal maps of circle domains}
\author{Dimitrios Ntalampekos}
\address{Mathematics Department, Stony Brook University, Stony Brook, NY 11794, USA.}
\thanks{The author is partially supported by NSF Grant DMS-2000096.}
\email{dimitrios.ntalampekos@stonybrook.edu}
\date{\today}
\keywords{Circle domain, Koebe's conjecture, conformal rigidity, continuous extension, CNED, countably negligible for extremal distance}
\subjclass[2020]{Primary 30C62, 30C65; Secondary 30C35}

\begin{abstract}
We present sufficient conditions so that a conformal map between planar domains whose boundary components are Jordan curves or points has a continuous or homeomorphic extension to the closures of the domains. Our conditions involve the notions of cofat domains and $\CNED$ sets, i.e., countably negligible for extremal distance, recently introduced by the author. We use this result towards establishing conformal rigidity of a class of circle domains. A circle domain is conformally rigid if every conformal map onto another circle domain is the restriction of a M\"obius transformation.  We show that circle domains whose point boundary components are $\CNED$ are conformally rigid. This result is the strongest among all earlier works and provides substantial evidence towards the rigidity conjecture of He--Schramm, relating the problems of conformal rigidity and removability. 
\end{abstract}
\maketitle

\section{Introduction}

One of the most intriguing open problems in complex analysis is \textit{Koebe's conjecture}, predicting that every domain in the Riemann sphere is conformally equivalent to a \textit{circle domain}, a domain whose boundary components are circles or points. The best known result so far is the validity of the conjecture for countably connected domains, established in a seminal work of He--Schramm \cite{HeSchramm:Uniformization}. See the references in that paper for the history of the problem and also \cites{Schramm:transboundary,Rajala:koebe} for other approaches to the same result.

Although the existence in Koebe's conjecture is open in the general case, it is well known that uniqueness fails. Whenever the uniqueness fails for a domain $U$, there exists a conformal map $f_1$ from $U$ onto a circle domain $V_1$ and a conformal map $f_2$ from $U$ onto a circle domain $V_2$ such that the composition $f_2\circ f_1^{-1}$ is not a M\"obius transformation. A circle domain is \textit{conformally rigid} if every conformal map onto another circle domain is the restriction of a M\"obius transformation of the sphere. Thus, $V_1$ and $V_2$ are not rigid. 

The class of rigid circle domains includes finitely connected domains \cite{Koebe:FiniteUniformization}, countably connected domains \cite{HeSchramm:Uniformization}, domains whose boundary has $\sigma$-finite Hausdorff $1$-measure \cite{HeSchramm:Rigidity}, and domains whose quasihyperbolic distance satisfies a certain integrability condition \cite{NtalampekosYounsi:rigidity}, which is valid, for example, in John and H\"older domains. On the other hand, circle domains whose boundary has positive area are not rigid, as follows from work of Sibner \cite{Sibner:KoebeQC}. He and Schramm predicted that there is a strong connection between the problem of rigidity and the problem of conformal removability, and formulated the following conjecture.

\begin{conjecture}A circle domain is conformally rigid if and only if every compact subset of its point boundary components is conformally removable. 
\end{conjecture}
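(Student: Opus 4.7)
The plan is to attack the conjecture by splitting it into its two implications and deploying the machinery already assembled in the paper on each direction separately.

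For the direction \emph{removable implies rigid}, assume that every compact subset of the set $E$ of point boundary components of a circle domain $\Omega$ is conformally removable, and let $f\colon\Omega\to\Omega'$ be a conformal map onto another circle domain $\Omega'$. The strategy is the classical Schwarz reflection program. First, extend $f$ to a homeomorphism $\overline{\Omega}\to\overline{\Omega'}$ using the continuous/homeomorphic extension theorem for cofat domains with $\CNED$ boundary announced in the abstract; this sends circle boundary components of $\Omega$ to circle boundary components of $\Omega'$. Second, reflect $f$ successively across all circle boundary components using the Schwarz principle. Third, patch the reflected pieces into a global homeomorphism $\hat f$ of the sphere that is conformal off the closure of the reflected orbit of $E$. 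Finally, apply the hypothesis that compact subsets of this countable union of reflected copies of $E$ remain conformally removable to upgrade $\hat f$ to a global conformal self-map of $\hat{\C}$, which must be a M\"obius transformation.

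For the direction \emph{rigid implies removable}, proceed contrapositively: suppose some compact $K\subseteq E$ is not conformally removable, so there exists a homeomorphism $\phi$ of $\hat{\C}$ that is conformal on $\hat{\C}\setminus K$ yet is not a M\"obius transformation. Then $\phi|_\Omega$ is conformal, but $\phi(\Omega)$ need not be a circle domain; its boundary consists of Jordan curves and points. The plan is to invoke a uniformization theorem producing a conformal map $\psi\colon\phi(\Omega)\to\Omega''$ onto a circle domain $\Omega''$, so that $\psi\circ\phi\colon\Omega\to\Omega''$ is conformal. Rigidity of $\Omega$ then forces $\psi\circ\phi$ to be M\"obius, and by composing with $\psi^{-1}$ and analyzing the behavior across $K$, one hopes to extract conformality of $\phi$ across $K$, contradicting non-removability.

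The hard parts are substantial and explain why this conjecture is still open. In the forward direction, obtaining continuous extension from the \emph{bare} removability hypothesis --- rather than the stronger $\CNED$ hypothesis used in the paper --- is a genuine gap, since removability controls global conformal maps of the sphere but not a priori boundary behavior of conformal maps defined only on $\Omega$; bridging this would likely require working with the double of $\Omega$ across each circle boundary so that removability can be applied directly, but fatness and boundary regularity of this double are delicate. In the reverse direction, the uniformization step requires Koebe's conjecture for the intermediate domain $\phi(\Omega)$, which is itself open beyond the countably connected case. My approach would therefore be to first try to upgrade the paper's $\CNED$-based continuous-extension theorem to arbitrary conformally removable point boundaries --- the more tractable half --- and to treat the reverse implication as conditional on progress toward Koebe's conjecture for the image domains that arise.
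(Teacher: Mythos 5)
This statement is an open conjecture (due to He--Schramm), and the paper does not prove it; it only establishes the ``if'' direction under the strictly stronger hypothesis that the compact subsets of the point boundary components are $\CNED$ (Theorem \ref{theorem:rigidity}), which is offered as evidence for the conjecture, not as a resolution of it. So your proposal should be judged as a research plan, and as such it contains genuine, unclosed gaps in both directions --- gaps you correctly identify yourself but do not bridge. In the forward direction, the entire argument of the paper (the extension Theorem \ref{theorem:continuous_homeo} via the transboundary upper gradient inequality, and then the reflection/Schottky-group argument) is powered by the quantitative modulus property of $\CNED$ sets, in particular Theorem \ref{theorem:cned}; bare conformal removability gives you no control over curve families and hence no route to the continuous extension or to the eccentric-distortion estimate $E_f=1$ off an exceptional set. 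Replacing $\CNED$ by removability in the hypothesis is exactly equivalent to the (open) conjecture that removable sets are $\CNED$, so this half of your plan presupposes an open problem rather than proving one.

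In the reverse direction the situation is worse: your contrapositive argument needs a conformal map from $\phi(\Omega)$ onto a circle domain, i.e.\ Koebe's conjecture for an uncountably connected domain, which is open; and even granting that, concluding from ``$\psi\circ\phi$ is M\"obius on $\Omega$'' that $\phi$ is M\"obius on all of $\widehat{\C}$ requires controlling $\phi$ and $\psi$ across $K$ and across the complementary disks, which is again a removability-type assertion that is not supplied by the rigidity hypothesis alone. (Note also a structural subtlety: rigidity of $\Omega$ constrains conformal maps \emph{of $\Omega$ onto circle domains}, whereas non-removability of $K$ produces a global homeomorphism of the sphere; the known partial converse, via Sibner's work, only handles the case where $K$ has positive area.) In short, your decomposition is the natural one and your diagnosis of the obstructions is accurate, but neither half is a proof, and the statement remains a conjecture.
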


Here a compact set $E$ is \textit{conformally removable} if every homeomorphism of the Riemann sphere $\widehat{\C}$ that is conformal in the complement of $E$ is necessarily a M\"obius transformation. The problem of characterizing conformally removable sets remains open. Recently, in an attempt to resolve this, the author has introduced the class of $\CNED$ sets, i.e., \textit{countably negligible for extremal distance}, which is a generalization of the classical $\NED$ sets, studied by Ahlfors--Beurling  \cite{AhlforsBeurling:Nullsets}. Roughly speaking, a set $E$ in the Riemann sphere is $\CNED$ if the conformal modulus of a curve family is not affected when one restricts to the subfamily intersecting $E$ at countably many points. It is shown in \cite{Ntalampekos:exceptional} that $\CNED$ sets are removable for conformal maps and that they include several classes of sets that were known to be removable, such as sets of $\sigma$-finite Hausdorff $1$-measure and boundaries of domains satisfying the above-mentioned quasihyperbolic condition. Conjecturally, $\CNED$ sets coincide with conformally removable sets.

Our first main result verifies the rigidity conjecture for the class of $\CNED$ sets.

\begin{theorem}[Rigidity]\label{theorem:rigidity}
A circle domain is conformally rigid if every compact subset of its point boundary components is $\CNED$. 
\end{theorem}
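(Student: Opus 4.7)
The plan is to combine the continuous extension theorem announced in the abstract with an iterated Schwarz reflection argument, reducing rigidity to the conformal removability of CNED sets, which is established in \cite{Ntalampekos:exceptional}. Fix a conformal map $f\colon U\to V$ onto another circle domain. Every boundary component of $U$ is either a (round) Jordan curve automatically satisfying any reasonable cofat condition, or a point lying in a set whose compact subsets are CNED; the analogous statement for $V$ holds because this structure is preserved by $f$. Applying the extension theorem, I obtain a homeomorphic extension $\bar f\colon \bar U\to \bar V$ pairing each circular component $C_i\subset\partial U$ with a circular component $C_i'\subset\partial V$ and each point component with a point component.

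Next, I iterate Schwarz reflection. Let $R_i,R_i'$ denote the reflections across $C_i,C_i'$, and form the groups $\Gamma,\Gamma'$ of M\"obius and anti-M\"obius transformations they generate. The boundary identity $\bar f\circ R_i=R_i'\circ\bar f$ on $C_i$, combined with the Schwarz reflection principle, extends $f$ conformally across each $C_i$; iterating over all words in $\Gamma$ produces a homeomorphism
\[
F\colon \Omega\longrightarrow\Omega',\qquad \Omega:=\bigcup_{\gamma\in\Gamma}\gamma(U),
\]
that is (anti-)conformal on each tile $\gamma(U)$ and intertwines $\Gamma$ with $\Gamma'$ via $R_i\mapsto R_i'$.

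Let $E:=\widehat{\C}\setminus\Omega$. I would decompose $E=E_1\cup E_2$, where $E_1:=\Gamma\cdot P$ is the $\Gamma$-orbit of the point boundary components $P\subset\partial U$ and $E_2$ is the limit set of the reflection group. Because CNED is M\"obius invariant and enjoys appropriate countable-union closure (see \cite{Ntalampekos:exceptional}), every compact subset of $E_1$ is CNED by hypothesis. The set $E_2$ must be handled separately: following the pattern of the Koebe and He--Schramm rigidity theorems, the limit set of an inversive group generated by round disjoint circles is geometrically small (historically $\sigma$-finite one-dimensional measure or zero area suffices, and in the present generality the cofatness of the reflected tiles should yield the CNED property directly). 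Once every compact subset of $E$ is known to be CNED, a further application of the continuous extension theorem promotes $F$ to a homeomorphism $\widehat F\colon\widehat{\C}\to\widehat{\C}$ that is conformal on $\widehat{\C}\setminus E$. Conformal removability of CNED sets (\cite{Ntalampekos:exceptional}) then forces $\widehat F$ to be a M\"obius transformation, and consequently so is $f$.

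The hardest step, in my view, is the treatment of the limit set $E_2$ together with the final continuous extension of $F$ across $E$. In the countably connected case this is controlled by classical geometric estimates, but in our setting the point boundary components may be uncountable and dense and the reflection group need not be geometrically finite, so one must exploit the CNED hypothesis together with the cofatness of the round tiles in a M\"obius-equivariant way at every scale, while simultaneously ruling out pathological accumulation or overlap of the tiles $\gamma(U)$.
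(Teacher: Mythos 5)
Your overall architecture (homeomorphic extension, iterated reflection, reduction to removability of $\CNED$ sets) starts out parallel to the paper's, but the treatment of the limit set $E_2$ is a genuine gap, and it is precisely the step the paper is designed to avoid. You offer no argument that the limit set of the Schottky group of a circle domain with possibly uncountably many boundary circles is $\CNED$, and none is available: in the generality of Theorem \ref{theorem:rigidity} the group is generated by infinitely many reflections, the classical smallness estimates ($\sigma$-finite length or zero area of the limit set) are not at hand, and ``cofatness of the reflected tiles'' does not yield the $\CNED$ property. The paper sidesteps the limit set entirely. After extending $f$ to a homeomorphism of all of $\widehat{\C}$ that conjugates the two Schottky groups --- an extension obtained by a purely topological equivariance lemma \cite[Lemma 7.5]{NtalampekosYounsi:rigidity}, not by a second application of Theorem \ref{theorem:continuous_homeo} --- it estimates the \emph{eccentric distortion} $E_f$ pointwise. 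At a buried point $x$ (exactly the points you would place in $E_2$) there is a nested sequence of round disks shrinking to $x$ whose images under the extension are again round disks, because the extension intertwines the reflections; hence $E_f(x)=1$ there with no smallness hypothesis on the limit set whatsoever. The only set that must be $\CNED$ is the countable union of bi-Lipschitz images of compact sets of point boundary components together with the (rectifiable, hence $\CNED$) circles, which follows from the hypothesis and the closure properties of $\CNED$ sets. Theorem \ref{theorem:cned:removable:eccentric} then gives quasiconformality, and Lemma \ref{lemma:conformal} upgrades this to conformality since $E_f=1$ a.e.

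A second, smaller problem is your final step: you invoke ``conformal removability of $\CNED$ sets'' for the set $E=E_1\cup E_2$, but the removability statement you would cite applies to compact $\CNED$ sets (and the countable-union property is likewise stated for compact pieces), while $E$ is neither compact nor known to be $\CNED$; moreover, you must already possess a homeomorphism of the sphere across $E$ before any removability theorem can be applied, and this cannot come from another instance of Theorem \ref{theorem:continuous_homeo}, since the tiled domain $\Omega$ need not satisfy its hypotheses and its point components include all of $E_2$. The correct tool for a non-compact exceptional set is Theorem \ref{theorem:cned:removable:eccentric}, which is why the paper routes the entire argument through eccentric distortion rather than through removability of the complement of the orbit of $U$.
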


This result is the strongest so far, implying the rigidity results of He--Schramm \cites{HeSchramm:Uniformization,HeSchramm:Rigidity} and the joint result of Younsi with the author \cite{NtalampekosYounsi:rigidity}. Moreover, it provides substantial evidence for the conjecture and its proof is much more conceptual compared to proofs of previous results. The proof is given in Section \ref{section:rigidity}.

As in all previous works related to the rigidity conjecture, the first steps in the proof of Theorem \ref{theorem:rigidity} are to establish that a conformal homeomorphism between circle domains extends continuously and then homeomorphically to the closures of the domains under some conditions. The problem of continuous extension has its own interest and venerable history.

\begin{question}When does a conformal map between domains in the Riemann sphere extend continuously or homeomorphically to the closures of the domains?
\end{question} 

This fundamental question was first answered for simply connected domains by Carath\'eodory, who showed that a conformal map from the unit disk onto a simply connected domain $D$ extends continuously to the closures if and only if $\partial D$ is locally connected. Moreover, the extension is homeomorphic if and only $\partial D$ is a Jordan curve. This result extends with the same techniques to finitely connected domains bounded by Jordan curves. However, for domains of infinite connectivity it is well known that these results fail. 

Schramm \cite{Schramm:transboundary} studied the extension problem in  a class of domains resembling circle domains. Namely, he considered \textit{generalized Jordan domains}, that is, domains whose boundary components are Jordan curves or points with diameters shrinking to zero, and he imposed the assumption of \textit{cofatness} of the domains. A domain is cofat if each complementary component $P$ satisfies the estimate
$$ \area(P\cap B(x,r))\geq \tau r^2$$
for every ball $B(x,r)$ centered at a point of $P$ that does not contain $P$. Here $\tau>0$ is a uniform constant. In modern terminology, $P$ is Ahlfors $2$-regular.  Specifically, Schramm showed that a conformal map between countably connected, cofat, generalized Jordan domains extends to a homeomorphism of the closures.

He--Schramm \cite{HeSchramm:Rigidity} later developed further these techniques, while approaching the rigidity conjecture, and showed that a conformal map $f\colon U\to V$ between cofat generalized Jordan domains such that $\partial U$ has $\sigma$-finite Hausdorff $1$-measure extends to a homeomorphism of the closures. This is the first non-trivial result beyond the countably connected case.  Younsi and the author \cite{NtalampekosYounsi:rigidity} developed techniques that establish the same result for cofat generalized Jordan domains such that the quasihyperbolic metric on $U$ satisfies a certain integrability condition that is discussed above. 

It is  quite remarkable that the same conditions appear in three different problems: conformal rigidity, conformal removability, and continuous extension.  Our second main theorem reveals the connection of $\CNED$ sets to the problem of continuous extension and puts in common framework all previous results.

\begin{theorem}[Extension]\label{theorem:continuous_homeo}
Let $f\colon U\to V$ be a conformal homeomorphism between  generalized Jordan domains $U,V\subset \widehat\C$ such that  $U$ is cofat and every compact subset of the point components of $\partial U$ is $\CNED$. 
\begin{enumerate}[\upshape(i)]
	\item\label{theorem:continuous:i} If the diameters of the complementary components of $V$ lie in $\ell^2$, then $f$ has an extension to a continuous, surjective, and monotone map $F\colon \widehat{\C}\to \widehat{\C}$ such that $F^{-1}(\br V)=\br U$. 
	\item\label{theorem:continuous:ii} If $V$ is cofat, then $f$ has an extension to a homeomorphism $F\colon \widehat{\C}\to \widehat{\C}$.
\end{enumerate}
\end{theorem}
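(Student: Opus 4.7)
The plan is to adapt the transboundary-modulus strategy of Schramm \cite{Schramm:transboundary} and He--Schramm \cite{HeSchramm:Rigidity}, replacing their $\sigma$-finite Hausdorff $1$-measure condition on $\partial U$ with the weaker $\CNED$ hypothesis on the point components, which is fed into the modulus computation via the invariance theorem of \cite{Ntalampekos:exceptional}. Both parts should follow from a single core estimate: for part~(ii), I would first observe that cofatness of $V$ automatically yields $\sum_Q \diam(Q)^2 \lesssim \area(V) < \infty$, so the $\ell^2$ hypothesis of~(i) is fulfilled and gives the continuous monotone extension $F$; a second application of cofatness of $V$ then upgrades monotonicity to injectivity.

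A preliminary topological step shows that $f$ induces a bijection $\Phi$ between the complementary components of $U$ and those of $V$ via the cluster set $\Phi(P) := \bigcap_{\varepsilon>0}\overline{f(U \cap N_\varepsilon(P))}$, and Carath\'eodory's theorem applied to doubly connected neighborhoods handles continuous extension at the Jordan curve components of $\partial U$. The core of the proof is continuous extension at a point component $\{p\}$ of $\partial U$. Fix a scale $R>0$ and, for each $r \in (0,R)$, decompose $\partial B(p,r) \cap U$ into its arc components $\{\alpha_i^r\}$; let $\mathcal Q(r)$ denote the complementary components of $V$ corresponding under $\Phi^{-1}$ to components trapped inside $B(p,r)$ by these arcs. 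A Cauchy--Schwarz length-area argument on each arc, integrated in $r$ via Fubini and combined with the $\ell^2$ bound on diameters of $V$ together with $\area(V)<\infty$, produces for a suitably defined transboundary diameter $D(r)$ built from $\{\diam(f(\alpha_i^r))\}$ and $\{\diam(Q)\}_{Q\in\mathcal Q(r)}$ the bound
\[
\int_0^R \frac{D(r)^2}{r}\,dr \;\lesssim\; \area(V) + \sum_{Q} \diam(Q)^2 \;<\; \infty,
\]
hence a sequence $r_n\to 0$ along which $D(r_n)\to 0$. The cofatness of $U$ is invoked to transfer this $V$-side estimate back to a genuine shrinking of diameters in $U$, and the $\CNED$ hypothesis enters crucially through \cite{Ntalampekos:exceptional}: the arcs $\alpha_i^r$ generically meet the (possibly uncountably many) point components of $\partial U$, and the Fubini computation is legitimate only after one discards a zero-modulus subfamily of such arcs, which is precisely what $\CNED$ permits. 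The shrinking $D(r_n)\to 0$ forces $\Phi(\{p\})$ to be a single point and gives continuity of $F$ at $p$. Assembling $F$ on each complementary Jordan disk $P$ of $U$ either as a Carath\'eodory homeomorphism onto $\Phi(P)$ (when $\Phi(P)$ is itself a Jordan disk) or as the constant map to $\Phi(P)$ yields the desired monotone, surjective, continuous extension with $F^{-1}(\bar V)=\bar U$, proving~(i).

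For~(ii), I would rule out the two obstructions to injectivity of $F$: a Jordan disk $P$ of $U$ collapsing under $F$ to a point component $\{q\}$ of $V$, and distinct boundary points of $U$ sharing a cluster value in $V$. Both are handled by a transboundary modulus comparison: the first would force thin annuli in $U$ around the positive-diameter Jordan curve $\partial P$, whose moduli tend to zero as the annuli shrink to $\partial P$, to correspond conformally to annuli in $V$ separating $q$ from a fixed reference curve, whose moduli are forced by the cofat structure of $V$ near $q$ to remain uniformly bounded below, a contradiction; the second is handled analogously across distinct boundary components. The principal obstacle throughout is the core estimate, where the $\CNED$ invariance, an integral statement about modulus, must be converted into pointwise geometric shrinking of cluster sets. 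This conversion hinges on a delicate interplay between the Loewner-type modulus lower bounds supplied by cofatness of $U$ and the upper bounds from the $\ell^2$ hypothesis and $\area(V)<\infty$, and it requires that the removal of the zero-modulus exceptional family of arcs be carried out in a way compatible with the area integrals across all dyadic scales simultaneously.
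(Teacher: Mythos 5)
Your overall architecture differs from the paper's: the paper does not run the Schramm/He--Schramm length--area argument on concentric circles directly. It first proves (Theorem \ref{theorem:absolute_continuity}) that $f$ satisfies a \emph{transboundary upper gradient inequality} for all curves outside a single exceptional family of $2$-modulus zero, with $\rho=|Df|\chi_U$ and weights $\diam(f^*(p))$ on the complementary components, and then feeds this into the packing-conformal extension machinery of \cite{Ntalampekos:uniformization_packing} (Theorem \ref{theorem:continuous_extension_packing_qc}), which produces the monotone extension and, when $V$ is cofat, upgrades it to a homeomorphism; your separate injectivity argument for part (ii) is absorbed into that black box. Your observation that cofatness of $V$ forces the $\ell^2$ condition is correct and is exactly why (ii) reduces to the same analytic input as (i). A direct circle-based argument of the type you sketch shares the same analytic core, but it hinges on the point below.

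The genuine gap is in how you use the $\CNED$ hypothesis. You claim the Fubini computation is saved by ``discarding a zero-modulus subfamily of arcs, which is precisely what $\CNED$ permits.'' Discarding a zero-modulus family is always free (Fuglede's lemma) and requires nothing of $E$; and it is not sufficient here. Writing $E$ for a compact piece of the point components of $\partial U$, the circles $\partial B(p,r)$ that meet $E$ in an uncountable set need not form a negligible family---neither of zero modulus nor of measure zero in $r$---and for such a circle the transboundary diameter of $f^*(\partial B(p,r))$ picks up the term $\mathcal H^1_\infty\bigl(f^*(\partial B(p,r)\cap E)\bigr)$, which is controlled neither by $\int|Df|\,ds$ over the arcs $\alpha_i^r$ nor by $\sum_Q\diam(Q)$. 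The remark after Theorem \ref{theorem:absolute_continuity} exhibits the failure mode: for $E$ totally disconnected on a geodesic with $\mathcal H^1(E)>0$ and $f=\mathrm{id}$, the inequality fails along the geodesic even though nothing is being ``opened up.'' What $\CNED$ actually supplies is the perturbation theorem for compact $\CNED$ sets (Theorem \ref{theorem:cned}, i.e.\ \cite[Theorem 7.1 (V)]{Ntalampekos:exceptional}): outside a zero-modulus family, a curve can be replaced by a nearby simple curve meeting $E$ only \emph{countably}, at the cost of finitely many added arcs of total length and $\rho$-mass less than $\varepsilon$; one must additionally discard the curves through the countably many point components whose $f^*$-image is non-degenerate, so that the countable intersection with $E$ really has $f^*$-image of zero Hausdorff content, and the replacement must be done compatibly with the tails of the $\ell^2$ sum since the perturbed curve may meet new components. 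Without this step the core estimate does not close. A secondary misattribution: cofatness of $U$ is not used to ``transfer the $V$-side estimate back to $U$,'' but to run the Bojarski-type maximal estimate (Lemma \ref{lemma:bojarski}) bounding $\sum_{P\cap\partial B(p,r)\neq\emptyset}\diam(\Phi(P))$ by the line integral of an $L^2$ function.
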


Here a map is \textit{monotone} if the preimage of every point is a continuum. Moreover, continuous, surjective, and monotone maps of the sphere as in \ref{theorem:continuous:i} are precisely uniform limits of homeomorphisms \cite{Youngs:monotone}.

Our result also improves on a recent result of Luo--Yao \cite[Theorem 1]{LuoXiaoting:continuous}, who obtain a continuous extension under the much more restrictive assumptions that the point components of $\partial U$ have $\sigma$-finite Hausdorff $1$-measure and the diameters of the complementary components of $V$ lie in $\ell^1$, rather than in $\ell^2$. However, their results extend to domains beyond circle domains and beyond the cofat category. Our techniques extend to that setting as well; in addition, one can slightly relax the assumptions in Theorem \ref{theorem:continuous_homeo}, imposing no assumptions on finitely many complementary components of $U,V$. We do not discuss these generalizations here as they are not related to the rigidity conjecture.

The proof of Theorem \ref{theorem:continuous_homeo} is given in Section \ref{section:continuous}. It relies on the notions of Sierpi\'nski packings and packing-conformal maps that were recently introduced by the author in \cite{Ntalampekos:uniformization_packing}. We first establish in Section \ref{section:upgrade} that conformal maps as in Theorem \ref{theorem:continuous_homeo} satisfy a certain \textit{transboundary upper gradient inequality}. This inequality is then combined with results from \cite{Ntalampekos:uniformization_packing} to give Theorem \ref{theorem:continuous_homeo}. Without further assumptions, the extension provided by \ref{theorem:continuous:i} is not a homeomorphism. We provide an example in Section \ref{section:example}.

\begin{prop}\label{proposition:example}
There exists a conformal map $f$ from a circle domain $U$ onto a generalized Jordan domain $V$ such that the diameters of the complementary components of $V$ lie in $\ell^2$ and $f$ does not extend to a homeomorphism of the sphere.
\end{prop}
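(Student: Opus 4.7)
We construct the example explicitly. Fix a point $q\in\D$, and let
\[
V=\D\setminus\Bigl(\{q\}\cup\bigcup_{n\ge1}\overline{D_n}\Bigr),
\]
where $\{\overline{D_n}\}_{n\ge 1}$ is a sequence of pairwise disjoint closed thin Jordan domains accumulating only at $q$. We take the $D_n$ to be thin ``$C$''-shaped almost-annular Jordan domains (open annuli with a narrow radial slit removed so as to yield a single Jordan boundary), nested around $q$ at geometric scales $2^{-n}$, with their radial openings alternating between two opposite fixed directions, and with thicknesses so small that $\sum_n\diam(D_n)^2<\infty$. Then $V$ is a generalized Jordan domain whose boundary components are $\partial\D$, the Jordan curves $\{\partial D_n\}_n$, and the isolated point $\{q\}$. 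The alternating cascade produces two non-equivalent chains of cross-cuts of $V$ shrinking to $q$, i.e., two distinct prime ends of $V$ at $q$. The thinness of the $D_n$ also ensures that $V$ is not cofat at $q$, so Theorem~\ref{theorem:continuous_homeo}\ref{theorem:continuous:ii} does not apply.

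By the He--Schramm uniformization theorem for countably connected domains, there exist a circle domain $U$ and a conformal homeomorphism $f\colon U\to V$, and by Theorem~\ref{theorem:continuous_homeo}\ref{theorem:continuous:i} this $f$ extends to a continuous, surjective, monotone map $F\colon\widehat\C\to\widehat\C$. Since $f$ is conformal, it induces a bijection on prime ends, and hence the two distinct prime ends of $V$ at $q$ correspond to two distinct prime ends of $U$. In the circle domain $U$ these prime ends must come from two distinct boundary points $a_1,a_2\in\partial U$: the uniformization of a point component carrying multiple prime ends necessarily ``unfolds'' those prime ends into distinct accessing points of the circle domain (in the present setting, into two distinct point components of $\partial U$). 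Since the two prime ends of $V$ both terminate at the single topological point $q\in\overline V$, continuity of $F$ forces $F(a_1)=F(a_2)=q$. Hence $F$ is not injective on $\widehat\C$, and since any homeomorphism of $\widehat\C$ extending $f$ would have to coincide with a continuous monotone extension on $\overline U$, no such homeomorphic extension can exist.

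The principal obstacle is the geometric construction, in particular verifying that the alternating nested ``$C$''-cascade really produces two inequivalent prime ends at $q$. One checks this by exhibiting two explicit sequences of cross-cuts of $V$ that separate distinct interleaved subfamilies of $\{D_n\}$, and showing, by a standard topological non-inter-nesting argument, that the resulting Carath\'eodory chains belong to distinct equivalence classes. The $\ell^2$-summability together with pairwise disjointness of the $\overline{D_n}$ (needed to keep $V$ generalized Jordan, with $q$ as the sole accumulation point) imposes a delicate but achievable balance on the geometric parameters---for instance, diameters of order $2^{-n}$ and thicknesses of order $2^{-3n}$ suffice.
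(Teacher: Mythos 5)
Your construction does not establish the proposition, for two reasons. First, the central mechanism is flawed: a conformal homeomorphism $f\colon U\to V$ induces a \emph{bijection} between the complementary components of $U$ and of $V$ (the map $f^*$ of Section \ref{section:upgrade}), so the point component $\{q\}$ of $\partial V$ corresponds to exactly one component of $\widehat\C\setminus U$ --- a closed round disk or a point --- and cannot ``unfold into two distinct point components $a_1,a_2$ of $\partial U$.'' The only way to force every continuous extension to be non-injective along these lines is to arrange that this single corresponding component is \emph{non-degenerate}, so that the extension must collapse a circle of $\partial U$ onto the point $q$. Two prime ends at $q$ (even granting their existence, which you only sketch) both belong to that one corresponding component; if it is a point $p$, they are two prime ends at the same point $p$ and yield no two distinct preimages of $q$.

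Second, your specific geometry actively prevents the needed degeneration. Because the ``C''-shaped regions $D_n$ sit at geometric scales $2^{-n}$ and are thin, between consecutive scales there are pairwise disjoint round annuli $A_n\subset V$, each of modulus bounded below by a positive constant and each separating $q$ from $\partial \D$. Their preimages $f^{-1}(A_n)$ are disjoint nested ring domains in $U$ separating the component $P_q$ corresponding to $\{q\}$ from a fixed non-degenerate continuum; since the moduli of the $A_n$ sum to infinity, the serial rule forces $\md_2\Gamma(P_q,F)=0$ for such a continuum $F$, hence $P_q$ is a point. So no circle collapses to $q$, and nothing in your argument rules out a homeomorphic extension (indeed your example may well admit one). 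This is exactly why the paper takes $V$ to be a \emph{radial slit} domain with slits accumulating at $0$: the slits cross every round annulus around $0$, the end at $0$ has finite modulus, and an explicit piecewise-linear quasiconformal model (rhombi mapped to slit rectangles, transported by $z\mapsto e^{-2\pi i z}$) produces a cofat domain in which a non-degenerate component corresponds to $\{0\}$; Schramm's cofatness theorem then guarantees the uniformizing circle domain keeps that component a genuine circle, Sibner's theorem upgrades the quasiconformal map to a conformal one, and the parameters $k_i=i^2$, $n_i=i$ are tuned to get the $\ell^2$ condition. A further minor point: your appeal to Theorem \ref{theorem:continuous_homeo}\ref{theorem:continuous:i} requires the He--Schramm circle domain to be cofat with $\CNED$ point components, which you do not verify --- and which is unnecessary, since one only needs to exclude a homeomorphic extension.
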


\section{Preliminaries}

Throughout the paper we will use the spherical metric $\sigma$ and spherical measure $\Sigma$ on $\widehat{\C}=\C\cup \{\infty\}$. The Euclidean metric on $\C$ is denoted by $e$. In a metric space the open ball with center $x$ and radius $r$ is denoted by $B(x,r)$. If $B=B(x,r)$ and $k>0$, then $kB$ denotes the ball $B(x,kr)$. We use the notation $N_r(A)$ for the open $r$-neighborhood of a set $A$. 

The \textit{$1$-dimensional Hausdorff measure} $\mathcal H^1(E)$ of a set $E\subset \widehat{\C}$ is defined by
$$\mathcal{H}^{1}(E)=\lim_{\delta \to 0} \mathcal{H}_\delta^{1}(E)=\sup_{\delta>0} \mathcal{H}_\delta^{1}(E),$$
where 
$$
\mathcal{H}_\delta^{1}(E)=\inf \left\{\sum_{j} \operatorname{diam}(U_j): E \subset \bigcup_j U_j,\, \operatorname{diam}(U_j)<\delta \right\}
$$
for $\delta\in [0,\infty]$. When $\delta=\infty$, the latter quantity is called the $1$-dimensional Hausdorff content of $E$. The Hausdorff $1$-content is an outer measure on $\widehat{\C}$. We always have trivially
\begin{align*}
\min \{\mathcal H^1(E),\diam(E)\}\geq \mathcal H^1_{\infty}(E).
\end{align*}
Moreover, if $E$ is connected, then 
\begin{align*}
\mathcal H^1_{\infty}(E) = \diam(E). 
\end{align*}
This can be proved by following the argument in \cite[Lemma 2.6.1, p.~53]{BuragoBuragoIvanov:metric}. 

Let $\tau>0$.  A measurable set $K\subset \widehat{\C}$ is \textit{$\tau$-fat} if for each $x\in K$ and for each ball $B(x,r)$ that does not contain $K$ we have $\Sigma( B(x,r)\cap K) \geq \tau r^2$. A set is \textit{fat} if it is $\tau$-fat for some $\tau>0$. Note that points are automatically $\tau$-fat for every $\tau>0$. A more modern terminology for fatness Ahlfors $2$-regularity. See \cite[Lemma 2.7]{Ntalampekos:uniformization_packing} for a proof of the next lemma.

\begin{lemma}\label{lemma:bojarski}
Let $a\geq 1$ and $\{b_i\}_{i\in I}$ be a collection of non-negative numbers. Suppose that $\{D_i\}_{i\in I}$ is a family of measurable sets and $\{B_i=B(x_i,r_i)\}_{i\in I}$ is a family of balls in $\widehat{\C}$ such that $D_i\subset B_i$ and $\Sigma(B_i)\leq a \Sigma(D_i)$ for each $i\in I$. Then
\begin{align*}
\left \| \sum_{i\in I} b_i\chi_{B_i}\right \|_{L^2(\widehat{\C})} \leq c(a)  \left \| \sum_{i\in I} b_i\chi_{D_i} \right \|_{L^2(\widehat{\C})}
\end{align*}
\end{lemma}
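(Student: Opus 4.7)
The plan is to combine duality with the Hardy--Littlewood maximal inequality on the doubling metric measure space $(\widehat{\C},\sigma,\Sigma)$. Let $M$ denote the centered maximal operator on $\widehat{\C}$; since the spherical measure is doubling, $M$ is bounded on $L^2(\widehat{\C})$. By $L^2$-duality, it suffices to prove that
$$\sum_{i\in I}b_i\int_{B_i}g\,d\Sigma\leq c(a)\left\|\sum_{i\in I}b_i\chi_{D_i}\right\|_{L^2(\widehat{\C})}$$
for every non-negative $g\in L^2(\widehat{\C})$ with $\|g\|_{L^2}\leq 1$.

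The crucial pointwise step is to dominate the average of $g$ over $B_i$ by the values of $M(g)$ on $D_i$. For any $x\in D_i\subset B_i=B(x_i,r_i)$, the triangle inequality yields $B_i\subset B(x,2r_i)$, and the doubling property gives $\Sigma(B(x,2r_i))\lesssim \Sigma(B_i)$. Consequently,
$$M(g)(x)\geq \frac{1}{\Sigma(B(x,2r_i))}\int_{B_i}g\,d\Sigma \gtrsim \frac{1}{\Sigma(B_i)}\int_{B_i}g\,d\Sigma.$$
Integrating this lower bound over $D_i$ and using the hypothesis $\Sigma(B_i)\leq a\Sigma(D_i)$ produces
$$\int_{B_i}g\,d\Sigma \lesssim_a \int_{D_i}M(g)\,d\Sigma.$$

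Multiplying by $b_i\geq 0$, summing in $i\in I$, and swapping sum with integral (monotone convergence), the left-hand side of the reduced inequality is controlled by a constant multiple of
$$\int \Bigl(\sum_{i\in I}b_i\chi_{D_i}\Bigr)M(g)\,d\Sigma.$$
Cauchy--Schwarz together with the $L^2$-boundedness of $M$ bounds this quantity by $c(a)\|\sum_{i}b_i\chi_{D_i}\|_{L^2}\|g\|_{L^2}\leq c(a)\|\sum_i b_i\chi_{D_i}\|_{L^2}$, and taking the supremum over admissible $g$ completes the argument.

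The proof is essentially mechanical once the maximal operator is in hand, so I do not anticipate a serious obstacle. The only minor technical point is the uniform doubling of $(\widehat{\C},\sigma,\Sigma)$; this is standard since $\widehat{\C}$ is compact and $\Sigma$ is locally comparable to planar Lebesgue measure, so the doubling constant is uniform across the sphere.
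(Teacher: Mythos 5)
Your argument is correct and is the standard duality-plus-maximal-function proof of Bojarski's lemma, which is exactly the approach in the reference the paper cites for this statement (the paper itself gives no proof, only the citation to \cite{Ntalampekos:uniformization_packing}). The one point worth stating explicitly is the chain $B(x_i,r_i)\subset B(x,2r_i)\subset B(x_i,3r_i)$ for $x\in B_i$, which is what justifies $\Sigma(B(x,2r_i))\lesssim\Sigma(B_i)$ via doubling; otherwise everything is in order.
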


\subsection{\texorpdfstring{\textit{CNED} sets}{CNED sets}}\label{section:cned}

A \textit{curve} or \textit{path} in $\widehat{\C}$ is a continuous function $\gamma\colon I\to \widehat{\C}$, where $I$ is an interval. The \textit{trace} of $\gamma$ is the set $|\gamma|=\gamma(I)$. A path  is \textit{simple} if it is injective.

We give the definition of $2$-modulus of a curve family in the sphere. Let $\Gamma$ be a family of curves in $\widehat{\C}$. We say that a Borel function $\rho\colon \widehat{\C}\to[0,\infty]$ is \textit{admissible} for the curve family $\Gamma$ if
$$\int_{\gamma}\rho\, ds\geq 1$$
for each locally rectifiable curve $\gamma\in \Gamma$. We then define the \textit{$2$-modulus} or else \textit{conformal modulus} of $\Gamma$ as
$$\md_2\Gamma=\inf_{\rho}\int \rho^2 \, d\Sigma,$$
the infimum taken over all admissible functions $\rho$. 

For a set $E\subset \widehat{\C}$ we denote by $\mathcal F_{\sigma}(E)$ the family of curves $\gamma$ intersecting the set $E$ at countably many points; that is, the set $E\cap |\gamma|$ is countable. For two sets $F_1,F_2\subset \widehat{\C}$ we denote by $\Gamma(F_1,F_2)$ the family of curves $\gamma\colon [a,b]\to \widehat{\C}$ with  $\gamma(a)\in F_1$ and $\gamma(b)\in F_2$. A set $E\subset \widehat{\C}$ is $\CNED$ if for every pair of non-empty, disjoint continua $F_1,F_2\subset \widehat{\C}$ we have
\begin{align*}
\md_2\Gamma(F_1,F_2)= \md_2(\Gamma(F_1,F_2)\cap \mathcal F_{\sigma}(E)).
\end{align*}
We remark that $\CNED$ sets are not assumed to be compact. In \cite{Ntalampekos:exceptional} $\CNED$ sets are assumed to be subsets of $\C$ but one can also work more generally with subsets of $\widehat{\C}$ using the conformal invariance of modulus and the fact that the spherical and Euclidean metrics on $\C\subset \widehat{\C}$ are conformally equivalent. We list several properties of $\CNED$ sets.

\begin{enumerate}[label=(C\arabic*)]
	\item\label{c:rect} If $\mathcal H^1(E)<\infty$, then $E$ is $\CNED$ \cite[Theorem 1.5]{Ntalampekos:exceptional}.
	\smallskip  
	\item\label{c:meas} Every measurable $\CNED$ set has measure zero \cite[Lemma 2.5]{Ntalampekos:exceptional}.
	\smallskip
	\item\label{c:bilip} If $E\subset \widehat{\C}$ is a compact $\CNED$ set and $f$ is a bi-Lipschitz embedding from a neighborhood of $E$ into $\widehat{\C}$, then $f(E)$ is $\CNED$ \cite[Corollary 7.2]{Ntalampekos:exceptional}.
	\smallskip
	\item\label{c:union} A countable union of compact $\CNED$ sets is $\CNED$ \cite[Theorem 1.7]{Ntalampekos:exceptional}. 
\end{enumerate}

The next theorem is a special case of \cite[Theorem 7.1 (V)]{Ntalampekos:exceptional}, which gives a characterization of compact $\CNED$ sets.

\begin{theorem}\label{theorem:cned}
Let $E\subset \widehat{\C}$ be a compact $\CNED$ set. Then for each Borel function $\rho\colon \widehat{\C}\to [0,\infty]$ with $\rho\in L^2(\widehat{\C})$ there exists a path family $\Gamma_0$ with $\md_2\Gamma_0=0$ satisfying the following statements. For every rectifiable path $\gamma\colon [a,b]\to \widehat{\C}$ outside $\Gamma_0$ with  $\gamma(a),\gamma(b)\notin E$ and $\gamma(a)\neq \gamma(b)$ and for every $\varepsilon>0$ there exists a rectifiable simple path  $\widetilde \gamma$ with the same endpoints as $\gamma$ and a finite collection of non-constant paths $\{\gamma_i\}_{i\in I}$ such that the following statements are true.
\begin{enumerate}[\upshape(i)]
	\item The set $E\cap |\widetilde \gamma|$ is countable.
	\item\label{cned:ii} $|\widetilde \gamma| \subset |\gamma|\cup \bigcup_{i\in I}|\gamma_i|\subset  N_{\varepsilon}(|\gamma|)$.
	\item $\sum_{i\in I} \ell(\gamma_i)<\varepsilon$.
	\item $\sum_{i\in I} \int_{\gamma_i}\rho\, ds <\varepsilon.$
\end{enumerate}
Moreover the paths $\gamma_i$, $i\in I$, can be taken lie outside a given path family $\Gamma_1$ with $\md_2\Gamma_1=0$. 
\end{theorem}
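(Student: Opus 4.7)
The plan is to reduce this statement to the characterization of compact $\CNED$ sets in \cite[Theorem 7.1 (V)]{Ntalampekos:exceptional}. That characterization is essentially identical to the conclusion of Theorem \ref{theorem:cned}, so the main task is to verify that taking the test function to be the prescribed $\rho \in L^2(\widehat{\C})$ yields the exact form of (i)--(iv) stated above; the argument then becomes largely a matter of translation into the notation used here.

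For a self-contained deduction I would proceed as follows. From the defining equality $\md_2 \Gamma(F_1,F_2) = \md_2(\Gamma(F_1,F_2) \cap \mathcal F_\sigma(E))$, a minimizing-sequence argument in the space of admissible functions produces, for each pair of disjoint continua $F_1,F_2 \subset \widehat{\C}\setminus E$, a null subfamily of $\Gamma(F_1,F_2)$ outside of which any rectifiable path admits a perturbation meeting $E$ countably, with controlled $\rho$-mass deviation. Countably exhausting over a dense collection of such pairs $(F_1,F_2)$ yields a single global null family $\Gamma_0$.

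Next, for a generic $\gamma \notin \Gamma_0$ with $\gamma(a),\gamma(b)\notin E$ and given $\varepsilon>0$, localize by covering the compact set $|\gamma| \cap E$ by finitely many balls $\{B_j\}$ contained in $N_\varepsilon(|\gamma|)$ and of bounded overlap, via a Vitali-type decomposition compatible with Lemma \ref{lemma:bojarski}. Take the first and last times at which $\gamma$ crosses into a slightly enlarged ball $2B_j$ to extract short subpaths $\alpha_j$ with endpoints $p_j, q_j$ off $E$, and apply the local $\CNED$ replacement inside $2B_j$ to the point-continua $\{p_j\}, \{q_j\}$ to obtain a detour $\gamma_j$ with $|\gamma_j|\cap E$ countable and $|\gamma_j|\subset N_\varepsilon(|\gamma|)$. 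Substituting each $\alpha_j$ by $\gamma_j$ in the concatenation and passing to a simple subpath with the same endpoints produces $\widetilde\gamma$; conditions (i) and (ii) are immediate. The bounds (iii) and (iv) follow by combining the local replacement estimates with Lemma \ref{lemma:bojarski} and Cauchy--Schwarz against $\rho \in L^2(\widehat{\C})$. The requirement that each $\gamma_j$ lie outside a prescribed null family $\Gamma_1$ is arranged by choosing $\gamma_j$ from a dense reservoir of admissible detours in each $2B_j$, using $\md_2 \Gamma_1 = 0$.

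The main obstacle is obtaining (iii) and (iv) simultaneously from a single modulus-zero exceptional family; each bound separately follows from $\CNED$, but coupling them requires a diagonal argument over the scale of $\{B_j\}$ and the choice of admissible functions tied to $\rho$. This coupling is the technical heart of \cite[Theorem 7.1]{Ntalampekos:exceptional}, which I would invoke directly rather than rederive.
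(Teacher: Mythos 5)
Your proposal matches the paper's treatment exactly: the paper offers no proof of this theorem, presenting it only as a special case of \cite[Theorem 7.1 (V)]{Ntalampekos:exceptional}, which is precisely the reduction you make and the result you ultimately invoke. Your supplementary sketch of a self-contained derivation is not needed and would not suffice on its own---as you yourself note, producing a single modulus-zero family that yields (iii) and (iv) simultaneously is the technical heart of the cited theorem and does not follow softly from the defining modulus equality---but since you defer to the citation for that step, the approach is the same.
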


$\CNED$ sets play an important role in the theory of quasiconformal maps, as they are removable in the following sense.

\begin{theorem}[{\cite[Corollary 1.4]{Ntalampekos:exceptional}}]\label{theorem:cned:removable}
Let $E\subset \widehat{\C}$ be a compact $\CNED$ set. Then every homeomorphism $f\colon U\to V$ between open sets $U,V\subset 
\widehat{\C}$ that is (quasi)\-conformal on $U\setminus E$ is (quasi)conformal on $U$.  
\end{theorem}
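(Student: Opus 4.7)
The plan is to prove the theorem via the modulus-theoretic characterization of quasiconformality: a homeomorphism $f\colon U\to V$ is $K$-quasiconformal if and only if
$$K^{-1}\md_2 \Gamma\leq \md_2 f(\Gamma)\leq K \md_2 \Gamma$$
for every family $\Gamma$ of curves joining two disjoint continua $F_1,F_2$ contained in $U$. For such a $\Gamma$, let $\Gamma'=\Gamma\cap \mathcal F_\sigma(E)$; the $\CNED$ property of $E$ applied to $F_1,F_2$ gives $\md_2 \Gamma=\md_2 \Gamma'$, so it suffices to handle the subfamily of curves meeting $E$ at countably many points.

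For the inequality $\md_2 \Gamma\leq K\md_2 f(\Gamma)$, I would start with a $\rho$ admissible for $f(\Gamma)$ in $V$ and define the pullback $\tilde\rho=(\rho\circ f)\|Df\|$ on $U\setminus E$, extended by zero on $E$. The quasiconformal change of variables on $U\setminus E$ yields $\|\tilde\rho\|_{L^2(U)}^2\leq K\|\rho\|_{L^2(V)}^2$. For each rectifiable $\gamma\in \Gamma'$, the set $\gamma^{-1}(E)$ is countable, hence has one-dimensional Lebesgue measure zero; parametrizing $\gamma$ by arclength, one decomposes it into countably many subarcs contained in $U\setminus E$ joined at points of $E$. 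The ACL property of the quasiconformal map $f|_{U\setminus E}$ applied to each subarc, combined with summation, yields $\int_\gamma \tilde\rho\, ds\geq \int_{f\circ\gamma} \rho\, ds\geq 1$. Hence $\tilde\rho$ is admissible for $\Gamma'$ and $\md_2 \Gamma=\md_2 \Gamma'\leq K\md_2 f(\Gamma)$.

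The reverse inequality $\md_2 f(\Gamma)\leq K\md_2 \Gamma$ is the main obstacle, because it is not immediate that $f(E)\subset V$ is $\CNED$; property \ref{c:bilip} only guarantees invariance under bi-Lipschitz maps, and at this stage we do not know $f$ is even quasiconformal near $E$. My approach is to sidestep the need for $\CNED$ of $f(E)$ by invoking the constructive characterization in Theorem \ref{theorem:cned}. Given $\sigma$ admissible for $\Gamma$ on $U$ with small $L^2$ norm, one applies Theorem \ref{theorem:cned} to the Borel function $\sigma$ and the compact set $E$: outside a $\md_2$-zero exceptional family, every rectifiable path $\gamma\in \Gamma$ can be modified to a simple path $\widetilde\gamma\in \Gamma'$ differing from $\gamma$ by an arc of length and $\sigma$-integral at most $\varepsilon$. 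Pushing $\widetilde\gamma$ forward by $f$ and using the ACL property of $f^{-1}$ on $V\setminus f(E)$, the pushforward metric $\widetilde\sigma=(\sigma\circ f^{-1})\|Df^{-1}\|$ (extended by zero on $f(E)$) becomes admissible on $f(\Gamma)$ up to an $\varepsilon$ correction, with $L^2$ norm controlled by $\sqrt{K}\|\sigma\|_{L^2(U)}+o(1)$. Letting $\varepsilon\to 0$ completes the proof.

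Once both modulus inequalities are verified, $f$ is $K$-quasiconformal on $U$ by the classical equivalence. In the conformal case $K=1$, the $1$-quasiconformality of $f$ on $U$ gives conformality by Weyl's lemma (or equivalently, by the fact that $1$-quasiconformal planar homeomorphisms are conformal). The key conceptual ingredient throughout is that the $\CNED$ hypothesis on $E$ allows modulus estimates to be transferred across $E$ without direct access to the derivative of $f$ on $E$, using only its quasiconformal behavior on $U\setminus E$.
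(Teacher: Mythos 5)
This statement is not proved in the paper at all: it is quoted from \cite[Corollary 1.4]{Ntalampekos:exceptional}, where it is deduced from the eccentric-distortion theorem (Theorem \ref{theorem:cned:removable:eccentric} above) rather than by a direct modulus argument, so your route is in any case different from the source's. Your first inequality, $\md_2\Gamma\leq K\md_2 f(\Gamma)$, is essentially sound once set up correctly: you should take $\Gamma$ to be the family of curves connecting the two complementary components of a ring $R$ with $\br R\subset U$ and restrict to curves inside $R$ (otherwise $f(\Gamma)$ is not defined, since the continua $F_1,F_2$ and the connecting curves need not lie in $U$), and you need the localization $\md_2\Gamma_R=\md_2(\Gamma_R\cap\mathcal F_\sigma(E))$, which follows from the $\CNED$ identity because every curve of $\Gamma(F_1,F_2)\cap \mathcal F_\sigma(E)$ has a subcurve in $R$ of the same kind. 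With that, the decomposition of an arclength-parametrized $\gamma\in\mathcal F_\sigma(E)$ into countably many subarcs in $U\setminus E$ does give $\int_\gamma\widetilde\rho\,ds\geq \int_{f\circ\gamma}\rho\,ds$ for curves outside a modulus-zero family, since the discarded parameter set maps onto a countable, hence $\mathcal H^1$-null, subset of $V$.

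The genuine gap is in the reverse inequality $\md_2 f(\Gamma)\leq K\md_2\Gamma$. Admissibility of $\widetilde\sigma=(\sigma\circ f^{-1})\|Df^{-1}\|\chi_{V\setminus f(E)}$ has to be checked on \emph{every} locally rectifiable curve $f\circ\gamma$ of the family $f(\Gamma)$, and such a curve may spend a set of positive length inside $f(E)$, where $\widetilde\sigma$ vanishes; nothing excludes this, because at this stage $f(E)$ is not known to be $\CNED$, $\NED$, or even of measure zero (that is precisely the failure mode of a non-removable set). Theorem \ref{theorem:cned} cannot repair this: it modifies $\gamma$ in the \emph{source} to a different curve $\widetilde\gamma$, and although $f\circ\widetilde\gamma$ again joins $f(F_1)$ to $f(F_2)$, admissibility is a condition on each individual curve of $f(\Gamma)$, not on a replacement subfamily; modulus is only monotone under passing to superfamilies or to families of subcurves, neither of which applies to your modification. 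This is exactly where removability becomes nontrivial, and it is why the source argues through the pointwise eccentric-distortion criterion instead. Your approach can be salvaged by dropping the reverse inequality altogether: in the plane the one-sided ring inequality $\md_2\Gamma_R\leq K\md_2 f(\Gamma_R)$ for all rings $\br R\subset U$ is already sufficient for $K$-quasiconformality (Gehring's ring characterization, equivalently the $K_I$-inequality, or the $K_O$-inequality for $f^{-1}$), after which the conformal case follows from $1$-quasiconformality as you indicate, or from Lemma \ref{lemma:conformal}.
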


We also have a more general statement for sets $E$ that are not compact. We define the \textit{eccentricity} of an open set $A$ in a metric space $(X,d)$  by
$$E_d(A)= \inf\{M\geq 1: \textrm{there exists an open ball}\,\, B\subset A\subset MB\},$$
Suppose that $A_n$ is a sequence of open sets containing $x$ with $\diam(A_n)\to 0$. If $x\in \C$, then $E_{e}(A_n)= (1+o(1))E_{\sigma}(A_n)$ as $n\to\infty$. This allows us to drop the subscript $\sigma$ or $e$ from the eccentricity of sets. Moreover, if $f$ is a map between open subsets of the sphere that is conformal in a neighborhood of $x$, then $E(A_n)=(1+o(1))E(f(A_n))$ as $n\to\infty$. 

Let $f\colon U\to V$ be a homeomorphism between open subsets of $\widehat{\C}$. The \textit{eccentric distortion of $f$} at a point $x\in U$, denoted by $E_f(x)$, is the infimum of all values $H\geq 1$ such that there exists a sequence of open sets $A_n\subset U$, $n\in \N$, containing $x$ with $\diam(A_n)\to 0$ as $n\to\infty$ and with the property that $E(A_n)\leq H$ and $E(f(A_n))\leq H$ for each $n\in \N$.   By the above, the eccentric distortion $E_f$ is invariant under pre- and post-compositions of $f$ by a conformal or anti-conformal map. We now state the main theorem of \cite{Ntalampekos:exceptional}. 

\begin{theorem}[{\cite[Theorem 1.2]{Ntalampekos:exceptional}}]\label{theorem:cned:removable:eccentric}
Let $E\subset \widehat{\C}$ be a $\CNED$ set and let $f\colon U\to V$ be an orientation-preserving homeomorphism between open sets $U,V\subset \widehat{\C}$. If there exists $H\geq 1$ such that $E_f(x)\leq H$ for each point $x\in U\setminus E$, then $f$ is quasiconformal, quantitatively.  
\end{theorem}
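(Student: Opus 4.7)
The plan is to construct explicitly a generalized Jordan domain $V$ together with its conformal uniformization $f\colon U \to V$ by a circle domain $U$, designed so that the continuous monotone extension $F\colon \widehat{\C} \to \widehat{\C}$ produced by Theorem~\ref{theorem:continuous_homeo}\ref{theorem:continuous:i} fails to be injective. Since $F$ is monotone and the connected components of $\partial U$ are pairwise disjoint circles and isolated points, the preimage $F^{-1}(p)$ of any $p \in \partial V$ is a continuum contained in a single such component. Hence $F$ fails to be a homeomorphism precisely when a non-degenerate arc of a boundary circle of $U$ is collapsed by $F$ to a single point $p_* \in \partial V$; by the bijective correspondence between components of $\partial U$ and of $\partial V$ that a homeomorphic extension would induce, this collapse requires $p_*$ to be a point component of $\partial V$ corresponding under the uniformization to a full circle of $\partial U$ rather than to another point.

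Set $V = \D \setminus (\{p_*\} \cup \bigcup_{n \geq 1} K_n)$, where $p_* \in \D$ is an interior point and the $K_n$'s are pairwise disjoint, very thin (non-cofat) closed Jordan regions accumulating only at $p_*$. Specifically, place each $K_n$ in the dyadic shell $\{2^{-n-1} \leq |z - p_*| \leq 2^{-n}\}$ as a thin, almost-enclosing annular strip with a small angular gap, so that $\diam(K_n) \leq 2^{-n}$ and therefore $\sum_n \diam(K_n)^2 < \infty$. The resulting $V$ is a generalized Jordan domain whose complementary components are $\widehat{\C} \setminus \D$, each $K_n$, and the singleton $\{p_*\}$. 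Invoking the He--Schramm uniformization theorem for countably connected domains produces a circle domain $U$ and a conformal map $f\colon U \to V$. Because $U$ is a circle domain it is cofat, and its point components are at most countable, hence $\CNED$ by \ref{c:rect} and \ref{c:union}; consequently Theorem~\ref{theorem:continuous_homeo}\ref{theorem:continuous:i} applies and produces the monotone extension $F$.

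The crux of the argument is showing that $\{p_*\} \subset \partial V$ corresponds under $f$ to a full circle $C \subset \partial U$ rather than to an isolated point. This will follow from a conformal modulus estimate: the family of curves in $V$ encircling $p_*$ has \emph{finite}, positive modulus, in contrast to the divergent modulus associated with a standard conformal puncture. The finiteness reflects the fact that the almost-enclosing $K_n$'s act as an effective conformal barrier, endowing $p_*$ with the conformal character of a circular boundary; this incompatibility with a puncture forces the uniformization to pair $\{p_*\}$ with a genuine boundary circle $C \subset \partial U$. Once the correspondence $\{p_*\} \leftrightarrow C$ is secured, the monotonicity of $F$ implies $F^{-1}(p_*) \subset C$, while the continuity of $F|_C$ with image constrained to the single point $p_*$ gives $F(C) = \{p_*\}$, which yields the desired failure of injectivity.

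The main technical obstacle is the modulus estimate and the resulting identification of the correct boundary structure of $U$ at $p_*$. The modulus upper bound is a shell-by-shell extremal-length computation: after uniformizing each annular shell $\{2^{-n-1} \leq |z - p_*| \leq 2^{-n}\}$ to a flat rectangle by a logarithm, the obstacle $K_n$ becomes a thin strip forcing any winding curve through a narrow passage of controlled width, and an appropriate series estimate yields a finite total. To bridge from the modulus estimate to the structural claim about $U$, I would use an approximation argument: exhaust $V$ by the finitely connected domains $V_N = \D \setminus (\{p_*\} \cup \bigcup_{n \leq N} K_n)$, whose uniformizations $U_N$ are classically understood, and track the distinguished boundary circle of $U_N$ surrounding the image of $p_*$; a normal-families argument combined with the uniform modulus bound prevents this circle from degenerating to a point in the limit $N \to \infty$, identifying it with the sought-after $C \subset \partial U$.
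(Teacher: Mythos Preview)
Your proposal does not address the stated theorem at all. The statement you are asked to prove is a removability result: if $E$ is $\CNED$ and $f\colon U\to V$ is an orientation-preserving homeomorphism with $E_f(x)\leq H$ for every $x\in U\setminus E$, then $f$ is quasiconformal. What you have written is instead an attempt to construct the example of Proposition~\ref{proposition:example}---a conformal map from a circle domain onto a generalized Jordan domain with $\ell^2$ complementary diameters that fails to extend homeomorphically. These are two entirely different assertions; one is a positive regularity theorem, the other is a counterexample to a separate extension question.

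Note also that in the paper this theorem is not proved but merely cited from \cite{Ntalampekos:exceptional}; there is no proof in the present paper to compare against. A proof would require analytic input about $\CNED$ sets and the metric definition of quasiconformality (e.g., showing that curve families through $E$ can be perturbed without affecting modulus, and then using the eccentricity bound off $E$ to control distortion), none of which your construction touches. As written, your proposal is simply a proof of a different proposition and must be discarded for the purposes of this statement.
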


Moreover, if $E_f=1$ a.e., then $f$ is conformal, as expected. We show this in the next lemma; see also \cite[Lemma 6.1]{BonkKleinerMerenkov:schottky} for a similar statement.

\begin{lemma}\label{lemma:conformal}
Let $f\colon U\to V$ be a quasiconformal homeomorphism between open sets $U,V\subset \widehat{\C}$ such that $E_f(x)=1$ for a.e.\ $x\in U$. Then $f$ is conformal. 
\end{lemma}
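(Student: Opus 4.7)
I plan to show that at almost every $x\in U$, the real derivative $Df(x)$ is complex-linear (equivalently, $f_{\bar z}(x)=0$), and then invoke Weyl's lemma. Since $f$ is quasiconformal, $f\in W^{1,2}_{\loc}(U)$ and is differentiable a.e.\ with nonsingular derivative. Fix such a point $x$ at which additionally $E_f(x)=1$; let $L:=Df(x)$, and let $\lambda_1\ge\lambda_2>0$ denote the singular values of $L$. The aim is to prove $\lambda_1=\lambda_2$.

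By the definition of $E_f(x)=1$, for each $\varepsilon>0$ there exists an open set $A\ni x$ of arbitrarily small diameter with $E(A),\,E(f(A))\le 1+\varepsilon$; thus $B(y,r)\subset A\subset B(y,(1+\varepsilon)r)$ for some $y$ and some small $r$, and $|y-x|\le (1+\varepsilon)r$ since $x\in A$. Using only the first-order expansion $f(x+h)=f(x)+Lh+\rho(h)$ with $|\rho(h)|=o(|h|)$ (valid uniformly for $h\in A-x$ because $|h|\le 2(1+\varepsilon)r$), I derive two asymptotic facts about $f(A)$ as $r\to 0$. First, every ball inscribed in $f(A)$ has radius at most $\lambda_2(1+\varepsilon)r+o(r)$: each $p\in f(A)$ lies within $o(r)$ of $f(x)+L(A-x)$, which is contained in the ellipse $f(x)+L\bigl((1+\varepsilon)B(y,r)-x\bigr)$ whose minor semi-axis is $\lambda_2(1+\varepsilon)r$. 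Second, $\diam f(A)\ge 2\lambda_1 r - o(r)$, obtained by taking two diametrically opposite points of $B(y,r)\subset A$ in the major direction of $L$ and estimating their images via the expansion.

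Combining these with the hypothesis $E(f(A))\le 1+\varepsilon$, choose a ball $B'$ with $B'\subset f(A)\subset (1+\varepsilon)B'$. The first inclusion forces $\mathrm{radius}(B')\le \lambda_2(1+\varepsilon)r+o(r)$, so $(1+\varepsilon)B'$ has diameter at most $2\lambda_2(1+\varepsilon)^2 r+o(r)$. Since $(1+\varepsilon)B'\supset f(A)$, this diameter must dominate $2\lambda_1 r - o(r)$, giving $\lambda_1\le \lambda_2(1+\varepsilon)^2+o(1)$. Letting $r\to 0$ and then $\varepsilon\to 0$ yields $\lambda_1=\lambda_2$; hence $L$ is complex-linear, i.e., $f_{\bar z}(x)=0$ (since $f$ is orientation-preserving).

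Therefore $f_{\bar z}=0$ a.e.\ on $U$. Since $f\in W^{1,2}_{\loc}(U)$, Weyl's lemma (equivalently, the distributional $\bar\partial$-Poincar\'e lemma) gives that $f$ is holomorphic on $U$, and a holomorphic homeomorphism between planar open sets is conformal. I anticipate the only real subtlety to be the careful bookkeeping of the $o(r)$ error terms when $A$ is merely sandwiched between balls not centered at $x$; however, the bound $|h|=O(r)$ on $A-x$ keeps all error terms $o(r)$, after which the eccentricity accounting is routine.
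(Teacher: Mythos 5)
Your proof is correct, and it rests on the same core idea as the paper's: reduce to a point of a.e.\ differentiability, use $E_f(x)=1$ to force the derivative $Df(x)$ to be a conformal linear map, and then invoke the standard fact that a Sobolev-regular homeomorphism with vanishing Beltrami coefficient is conformal. The difference is in how the middle step is executed. The paper rescales, $g_n(y)=r_n^{-1}g(r_ny)$, uses locally uniform convergence of $g_n$ to $Dg(0)$ together with Hausdorff convergence of $r_n^{-1}A_n$ to a nondegenerate closed ball (a soft compactness argument), and concludes that $Dg(0)$ maps a ball to a ball, hence $\|Dg\|=J_g$ a.e. You instead do explicit singular-value bookkeeping with the first-order expansion: the inradius of $f(A)$ is at most $\lambda_2(1+\varepsilon)r+o(r)$ while its diameter is at least $2\lambda_1 r-o(r)$, and the eccentricity bound on $f(A)$ squeezes $\lambda_1\le\lambda_2(1+\varepsilon)^2+o(1)$. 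Your version is more quantitative and avoids any compactness/Hausdorff-limit step (at the price of tracking the $o(r)$ errors, which you do correctly since $|h|=O(r)$ on $A-x$); the paper's version is shorter to state but leans on a subsequence extraction that it leaves implicit. The endgames are also the same fact in two guises: the paper's ``$\|Dg\|=J_g$ a.e.\ so $g$ is conformal'' is exactly your ``$f_{\bar z}=0$ a.e.\ plus Weyl's lemma.'' The only points worth tightening in your write-up are cosmetic: for a.e.\ $x$ both $x$ and $f(x)$ lie in $\C$, so passing from the spherical to the Euclidean eccentricity costs only a $(1+o(1))$ factor (as the paper notes), and since the eccentricity is defined as an infimum you should take a ball $B'$ with $f(A)\subset(1+2\varepsilon)B'$ rather than $(1+\varepsilon)B'$; neither affects the limit.
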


\begin{proof}
By post-composing $f$ with an isometry of  $\widehat{\C}$, we assume that we have a quasiconformal map $g\colon U'\to V'$ between planar domains with the property that for a.e.\ $x\in U'$ there exists a sequence of open sets $A_n$, $n\in \N$, containing $x$ and shrinking to $x$ such that $E(A_n)\to 1$ and $E(g(A_n))\to 1$ as $n\to\infty$. It suffices to show that $g$ is conformal. 

Let $x\in U'$ be a point of differentiability of $g$. We may assume that $x=g(x)=0$. We let $r_n=\diam(A_n)$, which converges to $0$ as $n\to\infty$, and define 
$$g_n(y)= r_n^{-1}g(r_ny)$$
in a neighborhood of $0$. As $n\to\infty$, this map converges locally uniformly in $\C$ to the linear map $Dg(0)$.  Since $E(A_n)=E(r_n^{-1}A_n)\to 1$, the sets $r_n^{-1}A_n$ converge in the Hausdorff sense to a non-degenerate closed ball containing $0$.  Therefore, the sets
$$g_n(r_n^{-1}A_n)= r_n^{-1}g(A_n)$$
converge in the Hausdorff sense to a compact set containing $0$. Since $E(r_n^{-1}g(A_n))=E(g(A_n))\to 1$, we conclude that this compact set is a possibly degenerate closed ball. Every linear transformation that maps a non-degenerate ball to a ball is conformal. This implies that $\|Dg\|=J_g$ a.e., so $g$ is conformal on $U'$. 
\end{proof}

\subsection{Sierpi\'nski packings and packing-quasiconformal maps}

Let $\{p_i\}_{i\in \N}$ be a collection of pairwise disjoint, non-separating continua in $\widehat{\C}$ such that $\diam(p_i)\to 0$ as $i\to\infty$. The collection $\{p_i\}_{i\in \N}$ is called a Sierpi\'nski packing and the set  $X=\widehat{\C}\setminus \bigcup_{i\in \N} p_i$ is its {residual set}. When there is no confusion, we call $X$ a Sierpi\'nski packing and the underlying collection $\{p_i\}_{i\in \N}$ is implicitly understood. The continua $p_i$, $i\in \N$, are  called the {peripheral continua} of $X$. A Sierpi\'nski packing (resp.\ domain) is \textit{cofat} if there exists $\tau>0$ such that each of its peripheral continua (resp.\ complementary components) is $\tau$-fat.

Let $X=\widehat{\C}\setminus \bigcup_{i\in I} p_i$ be a Sierpi\'nski packing or a domain, where in the latter case the collection $\{p_i\}_{i\in I}$ is understood to comprise the complementary components. We  consider the quotient space $\mathcal E(X)=\widehat{\C}/\{p_i\}_{i\in I}$, together with the natural projection map $\pi_X\colon \widehat{\C} \to \mathcal E(X)$. We use the notation $\widehat{E}=\pi_X(E)$ for sets $E\subset \widehat{\C}$. The decomposition of $\widehat{\C}$ into the singleton points of $X$ and the continua $p_i$, $i\in I$, is always upper semicontinuous. Therefore, Moore's theorem \cite{Moore:theorem} implies that the space $\mathcal E(X)$ is a topological $2$-sphere. See \cite[Section 2] {Ntalampekos:uniformization_packing} for a further discussion.

Following \cite{Ntalampekos:uniformization_packing}, for two Sierpi\'nski packings or domains $X,Y$, we define the notion a packing-quasi\-conformal map between the associated topological spheres $\mathcal E(X),\mathcal E(Y)$ as follows.

\begin{definition}\label{definition:packing_quasiconformal}
Let   $X=\widehat \C\setminus \bigcup_{i\in I} p_i$ and $Y=\widehat \C \setminus \bigcup_{i\in I} q_i$ be Sierpi\'nski packings or domains. Let $h\colon \mathcal E(X)\to\mathcal E(Y)$ a continuous, surjective, and monotone map such that $h(\widehat{p}_i)=\widehat q_i$ for each $i\in I$. We say that $h$ is \textit{packing-quasiconformal} if there exists $K\geq 1$ and a non-negative Borel function $\rho_h\in  L^2(\widehat{\C})$ with the following properties.
	\begin{enumerate}[\upshape(i)]
		\item\label{qc:i} \textit{(Transboundary upper gradient inequality)} There exists a curve family $\Gamma_0$ in $\widehat{\C}$ with $\md_2\Gamma_0=0$ such that for all curves $\gamma\colon [a,b]\to \widehat \C$ outside $\Gamma_0$ we have
	\begin{align*}
\dist( \pi_Y^{-1} \circ h \circ \pi_X(\gamma(a)), \pi_Y^{-1}\circ h\circ \pi_X(\gamma(b)))\leq \int_{\gamma}\rho_h\, ds + \sum_{i:p_i\cap |\gamma|\neq \emptyset} \diam(q_i).
\end{align*}
		\item\label{qc:ii} \textit{(Quasiconformality)} For each Borel set $E\subset \widehat \C$ we have
	$$\int_{\pi_X^{-1}(h^{-1}(\pi_Y(E)))} \rho_h^2\, d\Sigma \leq K\Sigma  (E\cap Y).$$ 
\end{enumerate}
In this case, we say that $h$ is \textit{packing-$K$-quasiconformal}. If $K=1$, then $h$ is called \textit{packing-conformal}.
\end{definition}

The following theorem summarizes Theorems 6.1 and 7.1 from \cite{Ntalampekos:uniformization_packing}. This will be the main tool for establishing Theorem \ref{theorem:continuous_homeo}.

\begin{theorem}\label{theorem:continuous_extension_packing_qc}
Let $X=\widehat{\C}\setminus \bigcup_{i\in \N}p_i$ and $Y=\widehat{\C}\setminus \bigcup_{i\in \N} q_i$ be Sierpi\'nski packings such that the peripheral continua of $X$ are uniformly fat closed Jordan regions or points and the peripheral continua of $Y$ are closed Jordan regions or points with diameters lying in $\ell^2(\N)$. Let $ h\colon \mathcal E(X)\to \mathcal E(Y)$ be a packing-$K$-quasiconformal map for some $K\geq 1$.  Then there exists a continuous, surjective, and monotone map $H\colon \widehat \C \to \widehat \C$ such that $\pi_Y\circ H=h\circ \pi_X$ and $H^{-1}(\inter(q_i))=\inter(p_i)$ for each $i\in \N$.  If, in addition, $Y$ is cofat, then $H$ may be taken to be a homeomorphism of the sphere. 
\end{theorem}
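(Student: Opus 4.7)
My plan is to construct $H$ piecewise and then verify continuity, monotonicity, and (under extra hypotheses) injectivity. On each peripheral continuum $p_i$ fix a continuous surjection $\phi_i \colon p_i \to q_i$ restricting to a homeomorphism $\inter(p_i) \to \inter(q_i)$ and sending $\partial p_i$ onto $\partial q_i$ (trivial if either is a point). On the residual set $X$, lift $h$ through the quotient maps: if $h(\pi_X(x)) = \{y\}$ for some $y \in Y$, set $H(x) = y$; if $h(\pi_X(x)) = \widehat q_i$, then $H(x) \in q_i$ is forced, and I take it in $\partial q_i$, with the precise choice pinned down by the continuity estimate below. With this setup, both $\pi_Y \circ H = h \circ \pi_X$ and $H^{-1}(\inter(q_i)) = \inter(p_i)$ are built in.

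The main analytic step is continuity of $H$. Given $x_n \to x_0$ in $\widehat{\C}$ I would choose rectifiable paths $\gamma_n$ from $x_n$ to $x_0$ of length $\asymp \sigma(x_n,x_0)$ lying outside the $\md_2$-null exceptional family $\Gamma_0$ of Definition~\ref{definition:packing_quasiconformal}\ref{qc:i}, and apply the transboundary upper gradient inequality
$$\dist\bigl(\pi_Y^{-1}(h(\pi_X(x_n))),\, \pi_Y^{-1}(h(\pi_X(x_0)))\bigr) \leq \int_{\gamma_n} \rho_h\, ds + \sum_{i\colon p_i \cap |\gamma_n| \neq \emptyset} \diam(q_i).$$
The integral tends to zero because $\rho_h \in L^2(\widehat{\C})$ and the $\gamma_n$ lie in shrinking sets. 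For the sum, split by the size of $\diam(q_i)$: the $\ell^2$ hypothesis gives only finitely many indices with $\diam(q_i) \geq \varepsilon$ for any fixed $\varepsilon>0$, and $\gamma_n$ avoids these for $n$ large; the small-index tail is estimated via Lemma~\ref{lemma:bojarski} applied to dilates of the $p_i$'s in a thin tube around $|\gamma_n|$, whose area is controlled by cofatness of $X$. The residual $\diam(q_i)$ terms absorb the gap between a point in $q_i$ and the set $q_i$ itself, yielding $\sigma(H(x_n),H(x_0)) \to 0$.

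Surjectivity of $H$ is inherited from that of $h$. Monotonicity holds because preimages of points in $\inter(q_i)$ are singletons by the homeomorphism $\phi_i|_{\inter(p_i)}$, and preimages of points $y \in Y$ equal $\pi_X^{-1}(h^{-1}(\widehat y))$, a continuum by monotonicity of $h$ and upper semicontinuity of the $\{p_i\}$-decomposition. For part~(ii), cofatness of $Y$ upgrades $h$ from monotone to a homeomorphism of $\mathcal E(X)$ and $\mathcal E(Y)$, which then lifts to a homeomorphism $H$: a nontrivial fiber of $h$ at some $\widehat y$ would produce a continuum in $\widehat{\C}$ all mapped by $H$ into a single $\pi_Y$-fiber, and the quasiconformality inequality $\int_{\pi_X^{-1}(h^{-1}(\pi_Y(E)))} \rho_h^2 \, d\Sigma \leq K\Sigma(E \cap Y)$ applied to small balls $E$ around $y$ — together with the fat lower bound $\Sigma(E \cap Y) \gtrsim r^2$ — yields a contradiction via standard separating-curve modulus estimates.

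The principal obstacle is the tail estimate $\sum_{p_i \cap |\gamma| \neq \emptyset} \diam(q_i) \to 0$ for shrinking $\gamma$, since the $q_i$'s are not assumed fat and hence $X$-side overlap control does not transfer directly to the $Y$-side. The natural remedy is a dyadic decomposition of indices by $\diam(q_i) \in (2^{-k-1},2^{-k}]$, applying Lemma~\ref{lemma:bojarski} scale by scale with the cofatness of $X$ and closing the summation over scales using the $\ell^2$ summability of $\{\diam(q_i)\}$.
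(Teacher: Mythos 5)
First, a point of record: the paper does not prove this statement at all --- it is quoted verbatim as a summary of Theorems 6.1 and 7.1 of \cite{Ntalampekos:uniformization_packing}, so there is no in-paper proof to compare yours against. Judged on its own terms, your outline identifies the right ingredients (upper gradient inequality for continuity, an averaging lemma like Lemma \ref{lemma:bojarski} for the tail of $\sum \diam(q_i)$, a modulus argument for injectivity in the cofat case), but the central continuity step contains a genuine error. You propose to join $x_n$ to $x_0$ by rectifiable paths $\gamma_n$ lying outside the exceptional family $\Gamma_0$ of Definition \ref{definition:packing_quasiconformal}. This is impossible in general: the family of all non-constant rectifiable curves passing through a fixed point $x_0$ already has $2$-modulus zero (take $\rho(x)=c\,|x-x_0|^{-1}\log(R/|x-x_0|)^{-1}$, which is in $L^2$ near $x_0$ while $\int_\gamma\rho\,ds=\infty$ for every such curve), so $\Gamma_0$ may contain \emph{every} curve ending at $x_0$. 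The same defect recurs in the claim that $\int_{\gamma_n}\rho_h\,ds\to 0$ ``because $\rho_h\in L^2$ and the $\gamma_n$ lie in shrinking sets'': for an individual curve this is simply false, since $\rho_h$ may be enormous on a neighborhood of $|\gamma_n|$. Both statements only hold \emph{on average} over a family of curves of positive modulus. The correct mechanism is a Courant--Lebesgue type argument: for a.e.\ radius $r$ the circle $\partial B(x_0,r)$ avoids $\Gamma_0$, its oscillation under the map is bounded by $\int_{\partial B(x_0,r)}(\rho_h+h_n)\,ds$ plus finitely many large $\diam(q_i)$, and integrating over $r\in(\delta,\sqrt{\delta})$ with Cauchy--Schwarz produces one circle with small oscillation; a separation argument then controls the oscillation on the enclosed disk. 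This averaging is also what rescues your tail estimate, since $\sum_{p_i\cap|\gamma|\neq\emptyset}\diam(q_i)$ need not even be finite for a bad curve when the diameters are only in $\ell^2$ (this is exactly the role of the auxiliary function $h_n$ in the proof of Theorem \ref{theorem:absolute_continuity}).

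A second, smaller gap: you define $H$ on each $p_i$ by an \emph{a priori} chosen surjection $\phi_i\colon p_i\to q_i$. The values of $H$ on $\partial p_i$ are not free --- they are forced by continuity from the residual set --- so the construction must first extract the boundary correspondence $\partial p_i\to q_i$ from $h$ and the upper gradient inequality and only then fill in the interior. Relatedly, in part (ii) injectivity of $H$ requires not only that the fibers of $h$ be points (your annulus-modulus argument is the right idea there) but also that $H|_{\partial p_i}\colon\partial p_i\to\partial q_i$ be injective, which needs a reverse-direction estimate exploiting cofatness of $Y$; your sketch does not address this. None of these objections is fatal to the strategy, but as written the proof does not go through.
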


\section{Transboundary upper gradient inequality of conformal maps}\label{section:upgrade}

For a domain $U\subset \widehat{\C}$ we denote by $\mathcal C(U)$ the family of complementary components of $U$.  Let $f\colon U\to V$ be a homeomorphism between domains $U,V\subset \widehat\C$. Then $f$ induces a homeomorphism $\widehat{f}\colon \mathcal E(U)\to \mathcal E(V)$ such that $\pi_V\circ f= \widehat{f}\circ \pi_U$ on $U$. We define the set function $$f^*= \pi_V^{-1}\circ \widehat{f} \circ \pi_U$$ from subsets of $\widehat{\C}$ to subsets of $\widehat{\C}$. We note that if $p\in \mathcal C(U)$, then $f^*(p)$ is precisely the complementary component of $V$ with the property that $\{f(z_n)\}_{n\in \N}$ accumulates at $f^*(p)$ whenever $\{z_n\}_{n\in \N}$ is a sequence in $U$ accumulating at $p$. Furthermore, $f^*$ has the following properties.
\begin{enumerate}[(F1)]
	\item\label{property:continuum} For each continuum $E\subset \widehat{\C}$, $f^*(E)$ is a continuum.
	\item\label{property:convergence} If $E_n$, $n\in \N$,  is a sequence of compact sets in $\widehat{\C}$ with $E_{n+1}\subset E_n$, $n\in \N$, converging in the Hausdorff sense to a compact set $E$, then $f^*(E_n)$ converges to $f^*(E)$. 
\end{enumerate}
The first property relies on the fact that $\pi_V$ is a monotone map and the second follows from the continuity of $\pi_V$,$\widehat{f}$, and $\pi_U$. See Sections 2.3--2.5 from \cite{Ntalampekos:uniformization_packing} for more background. For a conformal homeomorphism $f\colon U\to V$ we consider the derivative $|Df|\colon U\to (0,\infty)$ in the Riemannian metric of $\widehat{\C}$. If $z,f(z)\in \C$, then
\begin{align*}
|Df|(z)= \frac{1+|z|^2}{1+|f(z)|^2}|f'(z)|.
\end{align*}
The first step of the proof of Theorem \ref{theorem:continuous_homeo} is to show that conformal maps as in the statement satisfy a transboundary upper gradient inequality, as stated in the next theorem.

\begin{theorem}\label{theorem:absolute_continuity}
Let $f\colon U\to V$ be a conformal homeomorphism between  generalized Jordan domains $U,V\subset \widehat\C$ such that  $U$ is cofat, every compact subset of the point components of $\partial U$ is $\CNED$, and the diameters of the complementary components of $V$ lie in $\ell^2$. Then there exists a family of curves $\Gamma_0$ in $\widehat \C$ with $\md_2\Gamma_0=0$ such that for all curves $\gamma\colon [a,b]\to \widehat{\C}$ outside $\Gamma_0$ with $\gamma(a),\gamma(b)\in U$, we have
\begin{align*}
\sigma( f(\gamma(a)),f(\gamma(b)))\leq \int_{\gamma} |Df|\chi_U\, ds+ \sum_{\substack{p\in \mathcal C(U)\\ p\cap |\gamma|\neq \emptyset}} \diam(f^*(p)).
\end{align*}
\end{theorem}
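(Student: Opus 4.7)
My plan is to establish the transboundary upper gradient inequality for curves outside a modulus-zero family by combining the standard length estimate for the conformal map $f$ on subarcs lying in $U$ with Theorem~\ref{theorem:cned} to handle interactions with the point components of $\partial U$. First, set $\rho := |Df|\chi_U$; since $f$ is conformal, change of variables gives $\int_{\widehat{\mathbb C}}\rho^2\,d\Sigma = \Sigma(V) < \infty$, so $\rho \in L^2(\widehat{\mathbb C})$. The exceptional family $\Gamma_0$ will be the union of the standard Fuglede-type exceptional set for $\rho$ together with the families provided by Theorem~\ref{theorem:cned} when applied to $\rho$ and each member of a countable collection $\{K_m\}$ of compact $\CNED$ subsets of the point-component set $P \subset \partial U$, arranged so that every compact subset of $P$ lies in some $K_m$ (using that $\diam(J_n)\to 0$ for the Jordan components).

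Fix $\gamma\notin\Gamma_0$ with $\gamma(a),\gamma(b)\in U$, and assume $\sum_{p\cap|\gamma|\neq\emptyset}\diam(f^*(p))<\infty$ (else the inequality is trivial). For $\varepsilon>0$, the $\ell^2$-summability of $\{\diam(f^*(p))\}$ yields finitely many "big" components $p_1,\ldots,p_N$ met by $\gamma$ with $\diam(f^*(p_j))\geq\varepsilon$. Merging the visit-intervals $[\inf\gamma^{-1}(p_j),\sup\gamma^{-1}(p_j)]$ partitions $[a,b]$ into finitely many big-visit intervals and complementary open intervals $G_1,\ldots,G_L$; on each $G_\ell$ the curve $\gamma$ meets only components with $\diam(f^*)<\varepsilon$. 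By continuity and the pairwise disjointness of complementary components, I choose $t_\ell^-<t_\ell^+\in G_\ell$ with $\gamma(t_\ell^\pm)\in U$, and set $\phi_\ell:=\gamma|_{[t_\ell^-,t_\ell^+]}$. Applying Theorem~\ref{theorem:cned} to $\rho$ and a compact $\CNED$ set $K_{m(\ell)}\supset|\phi_\ell|\cap P$ yields a simple curve $\widetilde\phi_\ell$ with the same endpoints, meeting $K_{m(\ell)}$ countably, with $\int_{\widetilde\phi_\ell}\rho\,ds\leq\int_{\phi_\ell}\rho\,ds+\varepsilon/L$ and $|\widetilde\phi_\ell|\subset N_{\varepsilon/L}(|\phi_\ell|)$. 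Further shortcutting multiple visits to each Jordan complementary component by replacing the intermediate parts with a single arc inside that component produces $\widehat\phi_\ell$ visiting every complementary component at most once; this modification does not increase $\int\rho\,ds$ because $\rho=0$ outside $U$.

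I then decompose $\widehat\phi_\ell$ at its entries and exits of complementary components. Each maximal in-$U$ subarc contributes at most its $\rho$-integral to the spherical displacement by the standard length bound for the conformal $f$; each visit to a component $p$ contributes at most $\diam(f^*(p))$, since the $f$-limits at entrance and exit both lie in $f^*(p)$ (these limits exist by absolute continuity of $f$ along good curves in $U$). Summing over $\ell$ and adding the jumps $\sum_{j=1}^N \diam(f^*(p_j))$ across the big-visit intervals gives $\sigma(f(\gamma(a)),f(\gamma(b)))$ bounded by $\int_\gamma\rho\,ds + \sum_{p\cap|\gamma|\neq\emptyset}\diam(f^*(p)) + O(\varepsilon) + \sum_{p'\in\mathcal E_\varepsilon}\diam(f^*(p'))$, where $\mathcal E_\varepsilon$ consists of complementary components met by some $\widetilde\phi_\ell$ but not by $\gamma$. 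Any such $p'$ must lie in $N_{\varepsilon/L}(|\gamma|)\setminus|\gamma|$, so as $\varepsilon\to 0$ each fixed $p'$ with $\dist(p',|\gamma|)>0$ eventually exits $\mathcal E_\varepsilon$; coupling the $\ell^2$-summability of $\{\diam(f^*(p'))\}$ with the cofatness of $U$ (which bounds $\sum_{p'\in\mathcal E_\varepsilon}\diam(p')^2\leq\tau^{-1}\sigma(N_{\varepsilon/L}(|\gamma|))=O(\varepsilon)$) and Lemma~\ref{lemma:bojarski} yields, via Cauchy-Schwarz, that the extra-component contribution is $O(\sqrt\varepsilon)$ and vanishes in the limit $\varepsilon\to 0$.

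The main obstacle is two-fold: (i) constructing the compact $\CNED$ exhaustion $\{K_m\}$ of $P$ when $P$ is not $\sigma$-compact (this arises precisely when point components accumulate at Jordan components, and is handled by exploiting $\diam(J_n)\to 0$ together with hypothesis~\ref{c:union} that countable unions of compact $\CNED$ sets remain $\CNED$); and (ii) controlling the extra-component sum so that it vanishes as $\varepsilon\to 0$, which requires a careful coupling of the $\ell^2$-summability of the $V$-diameters with the cofatness of $U$ as outlined above. Overcoming these technicalities and then letting $\varepsilon\to 0$ completes the proof.
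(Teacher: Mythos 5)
Your overall architecture (replace $\gamma$ by a $\CNED$-good curve $\widetilde\gamma$, bound the $U$-portion by $\int\rho\,ds$ via the length formula, charge each complementary component met by $\diam(f^*(p))$, and control the components met by $\widetilde\gamma$ but not by $\gamma$) matches the paper's strategy, but both of the steps you yourself flag as "the main obstacle" have genuine gaps, and the fixes you propose do not work. First, the compact $\CNED$ set: the set $P$ of point components is only a $G_\delta$ (the complement in $\partial U$ of the $F_\sigma$ set $\bigcup_n J_n$), and $|\phi_\ell|\cap P$ need not be contained in \emph{any} compact subset of $P$, because the point components on $|\phi_\ell|$ may accumulate on the Jordan components. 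Hence no collection $\{K_m\}$ with the property you want exists in general, and property \ref{c:union} does not help: it produces a non-compact $\CNED$ set, whereas Theorem \ref{theorem:cned} requires compactness. The paper's resolution is structurally different: one fixes $n$, enlarges each Jordan component $p_i$ to an open Jordan region $W_i(n)$ chosen so that $f^*(\br{W_i(n)})\subset N_{\diam(q_i)/n}(q_i)$, observes that $E=\partial U\setminus\bigcup_i W_i(n)$ is then compact and consists only of point components (hence is $\CNED$ by hypothesis), applies Theorem \ref{theorem:cned} to this $E$, and accepts the price $(1+2/n)\diam(q_i)$ for every enlarged component $\br{W_i(n)}$ that the curve meets; the factor $(1+2/n)$ is removed at the very end by letting $n\to\infty$ with a dominated convergence argument. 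Without some such enlargement your appeal to Theorem \ref{theorem:cned} is not justified.

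Second, the extra-component sum $\sum_{p'\in\mathcal E_\varepsilon}\diam(f^*(p'))$. Your Cauchy--Schwarz, after inserting the weights $\diam(p')$, requires a bound on $\sum_{p'}\diam(f^*(p'))^2/\diam(p')^2$, and nothing in the hypotheses controls this quantity: only $\sum\diam(f^*(p'))^2<\infty$ is assumed, and the ratios $\diam(f^*(p'))/\diam(p')$ can be unbounded (a tiny component of $\partial U$ may map to a comparatively large one of $\partial V$). An area bound on $N_{\varepsilon/L}(|\gamma|)$ therefore cannot be converted into a bound on the $\ell^1$ sum of the image diameters. The paper's mechanism is a line-integral, not an area, estimate: the function $h_n=\sum_i\frac{\diam(q_i)}{\diam(W_i(n))}\chi_{2B_{i,n}}$ lies in $L^2(\widehat\C)$ precisely by cofatness, the $\ell^2$ hypothesis, and Lemma \ref{lemma:bojarski}; hence for curves outside a modulus-zero family $\int_\gamma h_n\,ds<\infty$, which gives $\sum_{i:\br{W_i(n)}\cap|\gamma|\neq\emptyset}\diam(q_i)<\infty$ along $\gamma$ itself, and the components newly met by $\widetilde\gamma$ all have large index (for $\delta$ small), so they are controlled by the tail functions $g_m$ with $\int_{\widetilde\gamma}g_m\leq\int_\gamma g_m+\delta\to0$. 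A further, more minor, point: your claim that the $f$-limits at entry and exit of each component exist needs justification; the paper avoids it altogether by estimating $\mathcal H^1_\infty(f^*(|\widetilde\gamma|))\geq\sigma(f(\gamma(a)),f(\gamma(b)))$ and using subadditivity of Hausdorff content, together with an extra exceptional family ensuring that the countably many points of $|\widetilde\gamma|\cap E$ have singleton images under $f^*$.
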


The exclusion of a family $\Gamma_0$ of modulus zero is necessary. Indeed, if $E$ is a totally disconnected subset of a geodesic curve $\gamma$ such that $\mathcal H^1(E)>0$ and $f$ is the identity map on $U=V=\widehat{\C}\setminus E$, then the transboundary upper gradient inequality fails along $\gamma$.

\begin{proof}
Let $\{p_i\}_{i\in I}$ be the collection of non-degenerate complementary components of $U$. Since $U$ is a generalized Jordan domain, $I$ is a countable set and  we may assume that $I\subset \N$. We also set $q_i=f^*(p_i)$, $i\in I$. We define $\rho=|Df|\chi_U$.  We will show that there exists a curve family $\Gamma_0$ of $2$-modulus zero such that for all rectifiable curves $\gamma\colon [a,b]\to \widehat{\C}$ outside $\Gamma_0$ with $\gamma(a),\gamma(b)\in U$ and $\gamma(a)\neq \gamma(b)$ we have
\begin{align}\label{proposition:qc:transboundary}
	\sigma(f(\gamma(a)), f(\gamma(b))) \leq \int_{\gamma}\rho\, ds + \sum_{i:p_i\cap |\gamma|\neq \emptyset}\diam(q_i).  
\end{align}
For the convenience of the reader, we split the proof into sections. 

\smallskip
\noindent
\textbf{Enlarging the complementary components.}
We fix $n\in \N$. By property \ref{property:convergence}, for each $i\in I$ we can find a Jordan region $W_i(n)$ such that
\begin{align}
\begin{aligned}\label{proposition:qc:diameters}&p_i\subset W_i(n)\subset \br{W_i(n)}\subset N_{\diam(p_i)/n}(p_i)\quad \textrm{and}\\
&q_i\subset f^*(W_i(n))\subset f^*(\br{W_i(n)}) \subset N_{\diam(q_i)/n}(q_i).
\end{aligned}
\end{align} 
Moreover, we require that 
$$W_{i}(n+1)\subset W_i(n)$$
for each $i\in I$ and $n\in \N$. Observe that $\diam(W_i(n))\to 0$ as $i\to\infty$, since $\diam(p_i)\to 0$. Also, $W_{i}(n)$ converges to $p_i$ as $n\to\infty$ in the Hausdorff sense.

For each  $i\in I$ consider a ball $B_{i,n}$ centered at a point of $W_i(n)$ and with radius $\diam(W_i(n))$. 
Consider the function 
$$h_{n} =\sum_{i\in I} \frac{\diam(q_i)}{\diam(W_i(n))}\chi_{2B_{i,n}}.$$
The uniform fatness of $p_i$ implies that
\begin{align*}
\Sigma(2B_{i,n}) \lesssim \diam(W_i(n))^2\lesssim (1+2/n)^2\diam(p_i)^2\lesssim \Sigma(p_i)
\end{align*}
for each $i\in I$. By Lemma \ref{lemma:bojarski} and the fact that the sets $p_i$, $i\in I$, are disjoint, we have
\begin{align}
\begin{aligned}\label{proposition:qc:l2}
\|h_n\|_{L^2(\widehat{\C})}^2 &\lesssim \int\left(\sum_{i\in I} \frac{\diam(q_i)}{\diam(W_i(n))}\chi_{p_i}\right)^2 \,d\Sigma \simeq \sum_{i\in I} \left( \frac{\diam(q_i)^2}{\diam(W_i(n))^2} \Sigma(p_i)\right)\\
&\lesssim \sum_{i\in I}\diam(q_i)^2<\infty.
\end{aligned}
\end{align}
This implies that there exists a path family $\Gamma_1(n)$ of $2$-modulus zero such that 
$$\int_{\gamma}h_n\, ds <\infty$$
for each $\gamma\notin \Gamma_1(n)$. We now fix a non-constant path $\gamma\notin \Gamma_1(n)$.  Let $J$ be the set of indices $i\in I$ such that  $\br{W_i(n)}\cap |\gamma|\neq\emptyset$ and $\gamma$ is not contained in $2B_{i,n}$. Then
\begin{align*}
\int_{\gamma} \chi_{2B_{i,n}}\, ds \geq \diam( W_{i}(n))
\end{align*}
for $i\in J$. Thus, 
$$\sum_{i\in J} \diam(q_i) \leq \int_{\gamma} \sum_{i\in J } \frac{\diam(q_i)}{\diam(W_i(n))}\chi_{2B_{i,n}}\, ds \leq \int_{\gamma} h_{n}\, ds <\infty. $$
Since $\gamma$ is non-constant and $\diam(2B_{i,n})\to 0$ as $i\to\infty$, there exist at most finitely many $i\in I\setminus J$. Therefore, for non-constant paths $\gamma\notin \Gamma_1(n)$ we have
\begin{align}\label{proposition:qc:h}
\sum_{i: \br{W_i(n)} \cap |\gamma|\neq \emptyset}\diam(q_i) <\infty.
\end{align}

\smallskip
\noindent
\textbf{Preliminary reductions.} We will show that for each $n\in \N$ there exists a curve family $\Gamma_0(n)$ of $2$-modulus zero such that for all rectifiable curves $\gamma\colon [a,b]\to \widehat{\C}$ outside $\Gamma_0(n)$ with $\gamma(a),\gamma(b)\in U$ and $\gamma(a)\neq \gamma(b)$ we have
\begin{align}\label{proposition:qc:transboundary1}
	\sigma(f(\gamma(a)), f(\gamma(b))) \leq \int_{\gamma}\rho\, ds +(1+2/n)\cdot  \sum_{i:\br{W_i(n)}\cap |\gamma|\neq \emptyset}\diam(q_i).
\end{align}
We now show how this implies \eqref{proposition:qc:transboundary}. We define 
$$\Gamma_0 =\Gamma_1(1)\cup \bigcup_{n\in \N}\Gamma_0(n),$$
which has $2$-modulus zero. If $\gamma\notin \Gamma_0$, then \eqref{proposition:qc:transboundary1} holds for each $n\in \N$. Recall that $p_i\subset \br{W_{i}(n+1)}\subset \br{W_i(n)}$ for each $i\in I$, $n\in \N$, and  $\br{W_i(n)}\to p_i$ as $n\to\infty$. This implies that the index set $\{i\in I:\br{W_i(n)}\cap |\gamma|\neq \emptyset\}$ contains $\{i\in I: p_i\cap |\gamma|\neq \emptyset\}$ and decreases to that set as $n\to \infty$. Moreover, for each $n\in \N$, the set $\{i\in I:\br{W_i(n)}\cap |\gamma|\neq \emptyset\}$ is contained in $\{i\in I:\br{W_i(1)}\cap |\gamma|\neq \emptyset\}$. Since $\gamma\notin \Gamma_1(1)$, by \eqref{proposition:qc:h} we have
\begin{align*}
\sum_{i:\br{W_i(1)}\cap |\gamma|\neq \emptyset}\diam(q_i) <\infty.
\end{align*}
The dominated convergence theorem implies that 
\begin{align*}
\sum_{i:\br{W_i(n)}\cap |\gamma|\neq \emptyset}\diam(q_i) \to \sum_{i:p_i\cap |\gamma|\neq \emptyset}\diam(q_i)
\end{align*}
as $n\to\infty$. Hence, by taking limits in \eqref{proposition:qc:transboundary1} we obtain \eqref{proposition:qc:transboundary}.

\smallskip
\noindent
\textbf{Length bounds.}
From now on, we fix $n\in \N$ throughout the remainder of the proof and we focus on establishing \eqref{proposition:qc:transboundary1}.  We also set $W_i=W_{i}(n)$ and $B_i=B_{i,n}$, $i\in I$.  Consider the set $E=(\widehat{\C}\setminus U)\setminus \bigcup_{i\in I} W_i$. This is a compact set of point boundary components of $U$, so it is $\CNED$ by assumption. For a path $\gamma\colon [a,b]\to \widehat{\C}$ we denote by $I(\gamma)$ the family of indices $i\in I$ such that $\br {W_i}\cap |\gamma|\neq \emptyset$. Note that the set $E\cup \bigcup_{i\in I(\gamma)}\br {W_i} $ is compact. Indeed, $\diam(W_i)\to 0$ as $i\to \infty$, so each limit point $x$ of $\bigcup_{i\in I(\gamma)} \br{W_i}$ that lies outside that set is necessarily contained in $|\gamma|\cap (\widehat{\C}\setminus U)$. Since $x\in |\gamma|\setminus \bigcup_{i\in I(\gamma)}\br{W_i}= |\gamma|\setminus \bigcup_{i\in I}\br{W_i}$, we conclude that $x\in E$, as desired. 

The map $f$ is conformal, so for all rectifiable paths $\gamma$ in $U$ we have
$$\ell(f\circ \gamma)= \int_{\gamma} |Df|\, ds.$$
See \cite[Theorem 5.6, p.~14]{Vaisala:quasiconformal}. Let $\gamma\colon [a,b]\to \widehat{\C}$ be a rectifiable path. The set $(a,b)\setminus \gamma^{-1}(E \cup \bigcup_{i\in I(\gamma)} \br{W_i} )$ is open so it is a countable union of disjoint open intervals $O_j$, $j\in J$. Each path $\gamma|_{O_j}$ is rectifiable and is contained in $U$, so we have
\begin{align}
\begin{aligned}\label{proposition:qc:gamma1}
\mathcal H^1_{\infty}(f^*(|\gamma|\setminus (E \cup \bigcup_{i\in I(\gamma)} \br{W_i})))&\leq \sum_{j\in J} \mathcal H^1_{\infty}( f(\gamma(O_j))) \leq  \sum_{j\in J} \diam(f(\gamma(O_j)))\\
&\leq \sum_{j\in J} \ell( f\circ \gamma|_{O_j})=\sum_{j\in J} \int_{\gamma|_{O_j}} |Df|\, ds\\& \leq \int_{\gamma} \rho\, ds.
\end{aligned}
\end{align}

\smallskip
\noindent
\textbf{Tail bounds.}
For $m\in \N$, consider the function 
$$g_m =\sum_{i\in I, i\geq m} \frac{\diam(q_i)}{\diam(W_i)}\chi_{2B_i}\leq g_1=h_n.$$
Observe that for each $d>0$ there exists $N(d)>0$ such that every curve $\gamma$ with  $\diam(|\gamma|)\geq d$ is not contained in $2B_i$ whenever $i\geq N(d)$. In particular, 
\begin{align}\label{proposition:qc:rho_n}
\sum_{i\in I(\gamma)\cap I, i\geq m}\diam(q_i)\leq \int_{\gamma}g_m\, ds \quad \textrm{for}\quad m\geq N(d).
\end{align}
Recall that $g_1=h_n\in L^2(\widehat{\C})$ by \eqref{proposition:qc:l2} and $\int_{\gamma}g_1\, ds<\infty$ for all $\gamma\notin \Gamma_1$, where $\Gamma_1=\Gamma_1(n)$. In particular, for $\gamma\notin \Gamma_1$ we have 
\begin{align}\label{proposition:qc:gm}
\int_{\gamma}g_m\, ds= \sum_{i\in I,i\geq m} \int_{\gamma}\frac{\diam(q_i)}{\diam(W_i)}\chi_{2B_i}\, ds \to 0
\end{align}
as $m\to\infty$, because these are tails of a convergent series. 

\smallskip
\noindent
\textbf{Avoiding degeneracies.}
Consider the set of points components $p\in \partial U$ such that 
$f^*(p)$ is a non-degenerate element of $\mathcal C(V)$. Since $V$ is a generalized Jordan domain, this set is countable. We let $\Gamma_2$ be the family of non-constant curves intersecting that set. Then $\md_2 \Gamma_2=0$ \cite[\S 7.9, p.~23]{Vaisala:quasiconformal}. Observe that if $\gamma$ is a non-constant path outside $\Gamma_2$, then for each $x\in |\gamma|\setminus \bigcup_{i\in I}p_i$ the set $f^*(x)$ is a singleton. Therefore, if $|\gamma|\cap E$ is a countable set, then $f^*(|\gamma|\cap E)$ is also countable. 

\smallskip
\noindent
\textbf{\textit{CNED} condition.} We apply Theorem \ref{theorem:cned} to the $\CNED$ set $E$ and the function $\rho+g_1$ and fix a path family $\Gamma_3$ with $\md_2\Gamma_3=0$ as in the statement. Now, we set $\Gamma_0(n)=\Gamma_1\cup \Gamma_2\cup \Gamma_3$, which is a family of $2$-modulus zero.  For the remaining of the proof we fix a rectifiable path $\gamma\colon [a,b]\to \widehat{\C}$ outside $\Gamma_0(n)$ such that $\gamma(a),\gamma(b)\in U$ and $\gamma(a)\neq \gamma(b)$. Our goal is to show \eqref{proposition:qc:transboundary1}.

By Theorem \ref{theorem:cned}, for each $\delta>0$ there exists a finite collection of non-constant paths $\{\gamma_j\}_{j\in J}$ and a rectifiable simple path $\widetilde \gamma$ with the same endpoints as $\gamma$ such that $|\widetilde \gamma|\cap E$ is countable, 
\begin{align}\label{proposition:qc:contain}
|\widetilde \gamma|\subset |\gamma|\cup \bigcup_{j\in J} |\gamma_j|\subset N_{\delta}(|\gamma|),
\end{align}
and
\begin{align}\label{proposition:qc:delta}
\sum_{j\in J} \int_{\gamma_j}(\rho+g_1)\, ds<\delta.
\end{align}
Moreover, the non-constant paths $\gamma_j$, $j\in J$, can be taken to be outside the family $\Gamma_2$, by the last part of Theorem \ref{theorem:cned}.  The definition of $\Gamma_2$ and the fact that the curves $\gamma,\gamma_j$, $j\in J$, are outside $\Gamma_2$ imply that $\widetilde \gamma\notin \Gamma_2$.  Since $|\widetilde \gamma|\cap E$ is countable, $f^*(|\widetilde \gamma|\cap E)$ is also countable by the previous, so 
\begin{align}\label{proposition:qc:countable}
\mathcal H^1_{\infty}(f^*(|\widetilde \gamma|\cap E))=0.
\end{align}
Observe that if a set $\br{W_i}$ does not intersect $|\gamma|$, then it is also disjoint from $N_{\delta}(|\gamma|)$, and thus from $|\widetilde \gamma|$,  for all sufficiently small $\delta>0$. This implies that for each $m\in \N$ there exists $\delta_0(m)>0$ such that for $\delta <\delta_0(m)$ we have
\begin{align}\label{proposition:qc:index}
I(\widetilde \gamma)\setminus I(\gamma)&\subset \{ i\in I(\widetilde \gamma): i\geq m\}.
\end{align}

\smallskip
\noindent
\textbf{Completion of the proof.} We let $d=\sigma(\gamma(a),\gamma(b))>0$ and fix $N(d)>0$ such that inequality \eqref{proposition:qc:rho_n} is true for all $m\geq N(d)$ and for all curves with diameter larger than $d$. We fix $m\geq N(d)$, so for all $\delta<\delta_0(m)$ there exists a simple path $\widetilde \gamma$ satisfying the above conditions. Since $\widetilde \gamma$ has the same endpoints as $\gamma$, we have $\diam(|\widetilde \gamma|)\geq d$. We now apply  \eqref{proposition:qc:index}, \eqref{proposition:qc:rho_n}, \eqref{proposition:qc:contain}, and \eqref{proposition:qc:delta} to obtain
\begin{align*}
 \sum_{i\in I(\widetilde \gamma)\setminus I(\gamma)} \diam(q_i) &\leq \sum_{i\in I(\widetilde \gamma), i\geq m} \diam(q_i) \leq  \int_{\widetilde \gamma}g_m\, ds\\&\leq  \int_{\gamma}g_m\, ds + \sum_{j\in J} \int_{\gamma_j}g_m\, ds\leq \int_{\gamma}g_m\, ds + \sum_{j\in J} \int_{\gamma_j}g_1\, ds \\
 &< \int_{\gamma}g_m\, ds+ \delta.
\end{align*}
This inequality and \eqref{proposition:qc:diameters} imply that
\begin{align}
\begin{aligned}\label{proposition:qc:w_bounds}
\mathcal H^1_{\infty}(f^*(  \bigcup_{i\in I(\widetilde \gamma)}\br{W_i}))&\leq \sum_{i\in I(\widetilde \gamma)} \diam(f^*(\br{W_i}))\leq (1+2/n)\sum_{i\in I(\widetilde \gamma)} \diam(q_i)\\
&\leq (1+2/n)\left(\sum_{i\in I(\gamma)}\diam(q_i) + \int_{\gamma}g_m\, ds +\delta\right). 
\end{aligned}
\end{align}
Since $\widetilde \gamma$ is a simple rectifiable path, by \eqref{proposition:qc:countable}, \eqref{proposition:qc:gamma1}, \eqref{proposition:qc:contain}, and  \eqref{proposition:qc:delta} we have
\begin{align}
\begin{aligned}\label{proposition:qc:length_bounds}
\mathcal H^1_{\infty}(f^*(|\widetilde \gamma|\setminus \bigcup_{i\in I(\widetilde \gamma)} \br{W_i})) &=\mathcal H^1_{\infty}(f^*(|\widetilde \gamma|\setminus (E \cup \bigcup_{i\in I(\widetilde \gamma)} \br{W_i})))\\
&\leq  \int_{\widetilde \gamma}\rho\, ds\leq  \int_{\gamma}\rho\, ds +\sum_{j\in J} \int_{\gamma_j}\rho \, ds\\
&< \int_{\gamma}\rho\, ds+\delta.
\end{aligned}
\end{align}
Finally, by property \ref{property:continuum} the set $f^*(|\widetilde \gamma|)$ is a continuum containing $f(\gamma(a))$ and $f(\gamma(b))$. Therefore, the estimates \eqref{proposition:qc:w_bounds} and \eqref{proposition:qc:length_bounds} give
\begin{align*}
\sigma(f(\gamma(a)),f(\gamma(b))) &\leq \diam(f^*(|\widetilde \gamma|)) =\mathcal H^1_{\infty}(f^*(|\widetilde \gamma| )) \\ 
&\leq \mathcal H^1_{\infty}(f^*(|\widetilde \gamma|\setminus \bigcup_{i\in I(\widetilde \gamma)} \br{W_i}))  + \mathcal H^1_{\infty}(f^*(  \bigcup_{i\in I(\widetilde \gamma)}\br{W_i}))\\
&\leq \int_{\gamma}\rho\, ds + \delta + (1+2/n)\left(\sum_{i\in I(\gamma)}\diam(q_i) + \int_{\gamma}g_m\, ds+ \delta\right).
\end{align*}
This is true for each  $m\geq N(d)$ and for $\delta<\delta_0(m)$. We first let $\delta\to 0$. Then we let $m\to\infty$ and obtain $\int_{\gamma}g_m\, ds\to 0$ by \eqref{proposition:qc:gm}. This gives the desired \eqref{proposition:qc:transboundary1}.
\end{proof}

\section{Continuous and homeomorphic extension}\label{section:continuous}

\begin{proof}[Proof of Theorem \ref{theorem:continuous_homeo}]

Let $f\colon U\to V$ be a homeomorphism between generalized Jordan domains $U,V\subset \widehat{\C}$.  Let $\{p_i\}_{i\in I}$ be the collection of all non-degenerate components of $\widehat\C\setminus U$, together with the degenerate components that are mapped under $f^*$ to non-degenerate components of $\widehat{\C}\setminus V$. We define $q_i=f^*(p_i)$, $i\in I$.  In particular, $\{q_i\}_{i\in I}$ is the collection of non-degenerate complementary components of $V$ together with the images under $f^*$ of the non-degenerate complementary components of $U$. Note that the collection $I$ is countable, since $U$ and $V$ are generalized Jordan domains. 

If $p\in \mathcal C(U)\setminus \{p_i:i\in I\}$, then $p$ is a point and for every sequence $z_n\in U$ converging to $p$, the sequence $f(z_n)$ converges to the point $f^*(p)\in \mathcal C(V)\setminus \{q_i:i\in I\}$. This implies that $f$ extends to a continuous and injective map from the set $X=\widehat{\C}\setminus \bigcup_{i\in I}p_i$ onto $Y=\widehat{\C}\setminus \bigcup_{i\in I}q_i$. The same argument, applied to $f^{-1}$, shows that $f$ extends to a homeomorphism from $X$ onto $Y$. We denote this homeomorphism by $f$ as well.  Observe that $\br X=\br U$ and $\br Y=\br V$.

\smallskip
\noindent
\textit{Case 1: Finitely many non-degenerate components.} Suppose that the set $I$ is finite. Then $X$ and $Y$ are finitely connected domains. If $W$ is an open set such that $W\subset \br W\subset X$, then $E=\br W\setminus U$ is a compact subset of the point boundary components of $U$. By assumption, $E$ is $\CNED$. The fact that $f$ is conformal on $W\setminus E=W\cap U$ and Theorem \ref{theorem:cned:removable} imply that $f$ is conformal on $W$. Since $W$ was arbitrary, we conclude that $f$ is a conformal map from $X$ onto $Y$. It is a standard fact that a conformal map between finitely connected generalized Jordan domains extends to a homeomorphism of the closures; see \cite[Chapter I, \S 8]{LehtoVirtanen:quasiconformal}. In particular $f$ can be extended to a homeomorphism of $\widehat{\C}$. 

\smallskip
\noindent
\textit{Case 2: Infinitely many non-degenerate components.}
Suppose that the set $I$ is infinite. In this case, $X$ and $Y$ are Sierpi\'nski packings. Recall that $f$ induces a homeomorphism $\widehat{f}\colon \mathcal E(U)\to \mathcal E(V)$ such that $\pi_V\circ f= \widehat{f}\circ \pi_U$ on $U$. The spaces $\mathcal E(X)$ and $\mathcal E(U)$ are naturally identified because they are both obtained by collapsing the non-degenerate components of $\widehat \C\setminus U$ to points. Similarly $\mathcal E(Y)\equiv \mathcal E(V)$ and we have $\pi_Y \circ f=\widehat{f}\circ \pi_X$ on $X$. We will show that $\widehat{f}$ is packing-conformal, in the sense of Definition \ref{definition:packing_quasiconformal}. Observe first that $\widehat{f}(\pi_X(p_i))=\pi_Y(q_i)$ for each $i\in I$. 

We set $\rho= |Df|\chi_U$. For each Borel set $E\subset \widehat{\C}$, the conformality of $f$ implies that
\begin{align*}
\int_{f^{-1}(E\cap V)} \rho^2 \, d\Sigma =\Sigma( E\cap V)\leq \Sigma(E\cap Y).
\end{align*}
The function $\rho$ vanishes on the set $\pi_X^{-1} (\widehat f^{-1}(\pi_Y(E\setminus V)))$, since this set is disjoint from $U$. Also, $\pi_X^{-1} (\widehat f^{-1}(\pi_Y(E\cap V)))= f^{-1}(E\cap V)$. Thus, 
\begin{align*}
\int_{\pi_X^{-1} (\widehat f^{-1}(\pi_Y(E)))} \rho^2 \, d\Sigma \leq  \Sigma(E\cap Y).
\end{align*}
This verifies condition \ref{qc:ii} from Definition \ref{definition:packing_quasiconformal}.

Next, we verify  condition \ref{qc:i}. By Theorem \ref{theorem:absolute_continuity} there exists a curve family $\Gamma_0$ in $\widehat{\C}$ with $\md_2\Gamma_0=0$ such that for all curves $\gamma\colon [a,b]\to \widehat{\C}$ outside $\Gamma_0$ with $\gamma(a),\gamma(b)\in U$, we have
\begin{align*}
\sigma( f(\gamma(a)),f(\gamma(b)))\leq \int_{\gamma} \rho \, ds+ \sum_{\substack{i:p_i\cap |\gamma|\neq \emptyset}} \diam(q_i).
\end{align*}
By enlarging the exceptional curve family $\Gamma_0$, we assume that if $\gamma\notin \Gamma_0$, then all subpaths of $\gamma$ have the same property. The above inequality extends by continuity to paths $\gamma\notin \Gamma_0$ with $\gamma(a),\gamma(b)\in X$. Suppose that $\gamma(a)\notin X$ or $\gamma(b)\notin X$. If they both lie in $p_i$ for some $i\in I$, then $\pi_Y^{-1}\circ \widehat f\circ \pi_X(\gamma(a))=\pi_Y^{-1}\circ \widehat f\circ \pi_X(\gamma(b))=q_i$ and the transboundary upper gradient inequality as in \ref{qc:i} is trivial. Suppose that $\gamma(a)$ and $\gamma(b)$ do not lie in the same complementary component $p_i$, $i\in I$. Then there exists an open subpath $\gamma_1=\gamma|_{(a_1,b_1)}$ of $\gamma$ that does not intersect the components $p_i$, $i\in I$, that possibly contain $\gamma(a)$ or $\gamma(b)$ and the points $\gamma(a_1), \gamma(b_1)$ lie on the boundaries of the components that possibly contain $\gamma(a),\gamma(b)$, respectively. Arbitrarily close to $a_1$ (resp.\ $b_1$) we may find parameters $a_2>a_1$ (resp.\ $b_2<b_1$) such that $\gamma(a_2)\in U$ (resp.\ $\gamma(b_2)\in U$). This implies that 
\begin{align*}
\dist(\pi_Y^{-1}\circ \widehat f\circ \pi_X(\gamma(a)), \pi_Y^{-1}\circ \widehat f\circ \pi_X(\gamma(b)) ) &\leq \liminf_{\substack{a_2\to a_1^+ \\ b_2\to b_1^-}} \sigma( f(\gamma(a_2)),f(\gamma(b_2)) )\\
&\leq \int_{\gamma} \rho \, ds+ \sum_{\substack{i:p_i\cap |\gamma|\neq \emptyset}} \diam(q_i).
\end{align*}
This completes the proof that $\widehat{f}$ is packing-conformal. 

By Theorem \ref{theorem:continuous_extension_packing_qc}, there exists a continuous, surjective, and monotone map $F\colon \widehat \C \to \widehat \C$ such that $\pi_Y\circ F=\widehat f\circ \pi_X$ and $F^{-1}(\inter(q_i))=\inter(p_i)$ for each $i\in I$. In particular, we have $\pi_Y\circ F=\pi_Y\circ f$ on $X$. This implies that $F=f$ on $X\supset U$ and hence $F$ is an extension of $f$. Since $\br X=\br U$, $\br Y=\br V$,  and $F^{-1}(\br Y)=\br X$, we see that $F^{-1}(\br V)=\br U$, as desired. Finally, if $Y$ is cofat, then by the last part of Theorem \ref{theorem:continuous_extension_packing_qc}, $F$ may be taken additionally to be a homeomorphism of $\widehat{\C}$. 
\end{proof}

\section{Rigidity of circle domains}\label{section:rigidity}

The proof below relies on properties of $\CNED$ sets from Section \ref{section:cned}. 

\begin{proof}[Proof of Theorem \ref{theorem:rigidity}]
Let $f\colon U\to V$ be a conformal map. The first step is to extend $f$ to a homeomorphism of $\widehat{\C}$. This can be achieved through reflections across the boundary circles of $U$. A detailed proof can be found in \cite[Section 7.1]{NtalampekosYounsi:rigidity}; see also \cite[Section 5]{BonkKleinerMerenkov:schottky} for similar considerations. Here we highlight the important features of the extension procedure.  

We denote by $S_i$, ${i\in I}$, the collection of circles in $\partial U$, by $B_i\subset \widehat{\C}\setminus \br U$ the open ball bounded by $S_i$ and by $R_i$ the reflection across the circle $S_i$, $i\in I$. Here, we regard $I$ as a subset of $\N$.  Consider the free discrete group generated by the family of reflections $\{R_i\}_{i\in I}$. This is called the \textit{Schottky group} of $U$ and is denoted by $\Gamma(U)$. Each $T\in \Gamma(U)$ that is not the identity can be expressed uniquely as $T=R_{i_1}\circ \dots\circ R_{i_k}$, where $i_j\neq i_{j+1}$ for $j\in \{1,\dots, k-1\}$. We also note that $\Gamma(U)$ contains countably many elements.

By Theorem \ref{theorem:continuous_homeo}, $f$ extends to a homeomorphism between $\br U $ and $\br{V}$. Thus, there exists a natural bijection between $\Gamma(U)$ and $\Gamma(V)$, induced by $f$. Namely, if $R_i^*$ is the reflection across the circle $S_i^*=f(S_i)$, then for $T=R_{i_1}\circ\dots\circ R_{i_k}$ we define $T^*=R_{i_1}^*\circ \dots \circ R_{i_k}^*$. By \cite[Lemma 7.5]{NtalampekosYounsi:rigidity}, there exists a unique extension of $f$ to a homeomorphism $\widetilde f$ of $\widehat \C$ with the property that $T^*=\widetilde f\circ T\circ \widetilde{f}^{-1}$ for each $T\in \Gamma(U)$. We will verify that $\widetilde f$ is conformal. For simplicity, we use the notation $f$ instead of $\widetilde f$. 

For each point $x\in \widehat \C$ we have the following trichotomy; see Lemma 7.2 and Corollary 7.4 in \cite{NtalampekosYounsi:rigidity}.
\begin{enumerate}[\upshape(I)]
	\item\label{type:interior} (Interior type) $x\in T(U)$ for some $T\in \Gamma(U)$.
	\item\label{type:boundary} (Boundary type) $x\in T(\partial U)$ for some $T\in \Gamma(U)$.
	\item\label{type:buried} (Buried type) There exists a sequence of indices $\{i_j\}_{j\in \N}$ with $i_j\neq i_{j+1}$ and disks $D_0=B_{i_1}$, $D_k=R_{i_1}\circ\dots\circ R_{i_k}(B_{i_{k+1}})$ such that $D_{k+1}\subset D_{k}$ for each $k\geq 0$ and $\{x\}=\bigcap_{k=0}^\infty D_k$.
\end{enumerate}
At each point $x$ of interior type \ref{type:interior} the mapping $f$ is conformal, so it maps infinitesimal balls centered at $x$ to infinitesimal balls centered at $f(x)$; in particular $E_f(x)=1$. If  $x$ is of buried type \ref{type:buried}, then there exists a sequence of balls $D_k$, $k\in \N$, shrinking to $x$ such that $f(D_k)$, $k\in \N$, are balls shrinking to $f(x)$. It follows that $E_f(x)=1$. 

For points of boundary type \ref{type:boundary} we have a further distinction. For each $i\in I$ and $n\in \N$, we consider a Jordan region $W_i(n)$ such that  
\begin{align}\label{rigidity:w}
\br{B_i}\subset W_i(n) \subset (1+1/n)B_i\quad \textrm{and}\quad \br {f(B_i)}\subset f(W_i(n)) \subset (1+1/n)f(B_i).
\end{align}
We let $E_n=\partial U\setminus \bigcup_{i\in I} W_i(n)$, which is a compact subset of $\partial U$. Denote by $F$ the points of $\partial U$ that do not lie on $E=\bigcup_{n\in \N}E_n$ or on $S=\bigcup_{i\in I}S_i$. Thus, each point $x\in \widehat{\C}$ of boundary type \ref{type:boundary} satisfies one of the following conditions.
\begin{enumerate}[label=(II.\alph*)]
	\item\label{type:b:e} $x\in T(E)$ for some $T\in \Gamma(U)$.
	\item\label{type:b:s} $x\in T(S)$ for some $T\in \Gamma(U)$.
	\item\label{type:b:f} $x\in T(F)$ for some $T\in \Gamma(U)$.
\end{enumerate}

By assumption, $E_n$ is $\CNED$ for each $n\in \N$. Also, each $T\in \Gamma(U)$ is bi-Lipschitz, so by property \ref{c:bilip} in Section \ref{section:cned} the set $T(E_n)$ is also $\CNED$.  For each $T\in \Gamma(U)$ and $i\in I$ the circle $T(S_i)$ is rectifiable so it is $\CNED$ by \ref{c:rect}. Now \ref{c:union} implies that the set 
$$G=\bigcup_{T\in \Gamma(U)}T(E\cup S) = \bigcup_{T\in \Gamma(U)} \bigcup_{n\in \N} \bigcup_{i\in I} T(E_n\cup S_i)$$
is also $\CNED$, as a countable union of compact $\CNED$ sets. In particular the collection of points of boundary type \ref{type:b:e} and \ref{type:b:s} is $\CNED$. 

Next, we treat points of boundary type \ref{type:b:f}. Suppose that $F\neq \emptyset$. If $x\in F$, then $x\in \partial U$, $x$ does not lie on any circle of $\partial U$, and $x\in  \bigcup_{i\in I} W_i(n)$ for each $n\in \N$. Observe that for each $i\in I$ we can have $x\in W_{i}(n)$ only for finitely many $n\in \N$, since otherwise we have $x\in S_i$, a contradiction. We conclude that the index set $I$ is infinite, in which case we may assume that $I=\N$, and there exists a sequence $i_{n}\in I$ with $i_n\to \infty$ such that $x\in W_{i_n}(n)$. In particular, the regions $W_{i_n}(n)$ shrink to $x$. By \eqref{rigidity:w} we have $E(W_{i_n}(n))\leq 1+1/n$ and $E(f(W_{i_n}(n)))\leq 1+1/n$. We conclude that $E_f(x)=1$. If $x\in T(F)$ for some $T\in \Gamma(U)$, then $T^{-1}(x)\in F$ so there exists a sequence $W_{i_n}(n)$ as above that shrinks to $T^{-1}(x)$.  The fact that $T$ is conformal implies that $E(T(W_{i_n}(n)))\to 1$ and $E( f( T(W_{i_n}(n))))= E( T(f(W_{i_n}(n))))\to 1$. Summarizing $E_f(x)=1$ for all points of boundary type \ref{type:b:f}.

Altogether, we have shown that $E_f(x)=1$ for all points $x\notin G$.  By Theorem \ref{theorem:cned:removable:eccentric}, $f$ is quasiconformal on $\widehat{\C}$. The set $G$ has measure zero by \ref{c:meas}, so $E_f=1$ a.e.\ on $\widehat{\C}$.  Finally, Lemma \ref{lemma:conformal} implies that $f$ is conformal.
\end{proof}

\section{An example}\label{section:example}
Here we present an example showing that a conformal map $f$ from a circle domain $U$ onto a generalized Jordan domain $V$ whose complementary components have diameters in $\ell^2$ need not extend to a homeomorphism of the closures without any further assumption. The reason is that a circle of the boundary of $U$ might correspond to a point component of $\partial V$. Although this type of construction is known to the experts,  some further attention is required to ensure the $\ell^2$ condition. 

For simplicity, the domain $V$ in our example will be a slit domain whose boundary consists of a point and of countably many isolated non-degenerate slits that accumulate at that point. However, upon opening slightly the slits (e.g.\ quasiconformally) one can obtain a generalized Jordan domain.

\begin{figure}
\centering
\begin{tikzpicture}[scale=4]

	\draw (0,0) rectangle (1,1);
	
	\draw[fill=black!10] (0,0) -- (0.5,0)-- (0.0625,0.5)-- (0,0.5)--cycle;
	
	\draw (0,0.25)-- (0.5,0.25); \node[anchor=east] at (0,0.125) {$1/2$};
	\draw (0,0.375)--(0.5,0.375);\node[anchor=east] at (0,0.31) {$1/2^2$};
	\draw (0,0.4375)--(0.5, 0.4375);
	\draw (0,0.5)--(0.5,0.5);
	
	\node[anchor=east] (a) at (-0.1,0.7) {$1/2^3$};
	\node[anchor=east] at (-0.28,0.7) {$1/2^{n-1}=$};
	\draw[->,densely dashed] (a) to [out=-90,in=180] (-0.02,0.4);
	\draw[->,densely dashed] (a) -- (-0.02,0.47);
	\draw[->,densely dashed] (a) to [out=0, in=90] (0.03,0.51);
	
	\draw[color=blue, fill=white, ] (0.5, 0) -- (0.9375 ,0.5) -- (0.5,1) -- (0.0625,0.5)--cycle;
	\draw[<-] (1.1,0) -- (1.1,0.45) node[anchor=south] {$2$};
	\draw[->] (1.1,0.55)-- (1.1,1);

\begin{scope}[xshift=-0.2cm]
	\draw (2,0) rectangle (2.5,2);
	\draw[fill=black!10] (2,0)--(2.25,0)--(2.25,1)--(2,1)--cycle;
	\draw[blue] (2.25,0) -- (2.25, 2);
	\draw (2,1)-- (2.25,1);
	\draw (2,.5)-- (2.25,.5);
	\draw (2,.75)-- (2.25,.75);
	\draw (2,.25)-- (2.25,.25);
	\draw[<-] (2.6,0) -- (2.6,0.95) node[anchor=south] {$2$};
	\draw[->] (2.6,1.05)-- (2.6,2);
	\node[anchor=east] at (2,0.125) {$1/n=1/4$};
	\node[anchor=east] at (2,0.375) {$1/4$};
	\node[anchor=east] at (2,0.625) {$1/4$};
	\node[anchor=east] at (2,0.875) {$1/4$};
\end{scope}

\end{tikzpicture}
\caption{The sets $Q(4)$ and $S(4)$.}\label{figure:rhombus}
\end{figure}
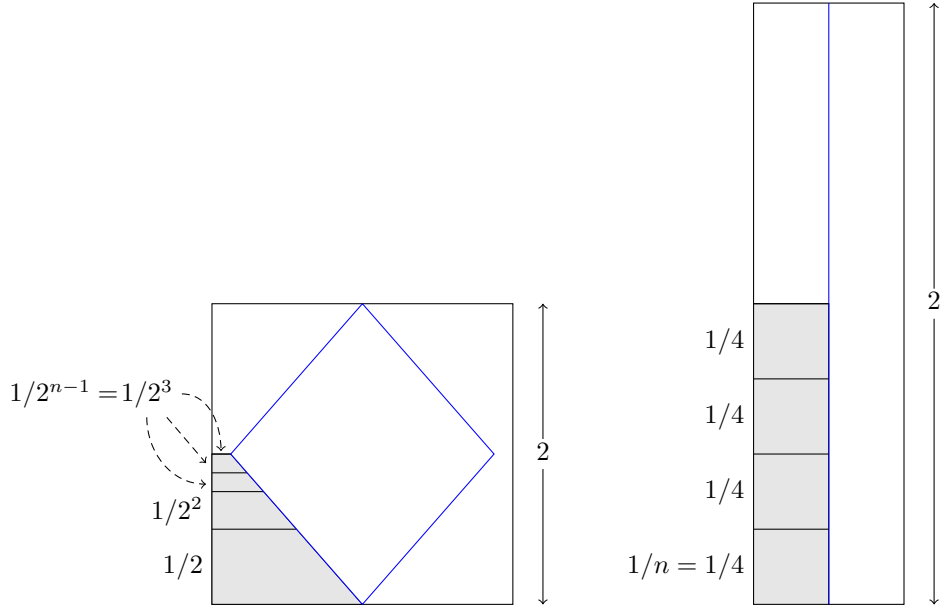

For each $n\in \N$, $n\geq 2$, consider a closed rhombus $R(n)\subset [-1,1]^2$ as in Figure \ref{figure:rhombus}, where we see $R(4)$. We set $Q(n)=[-1,1]^2\setminus R(n)$. We also consider a slit rectangle $S(n)=([-1/n,1/n]\setminus \{0\})\times [-1,1]$ as in the figure, where we see $S(4)$.  We subdivide $Q(n)$ into trapezoids, as shown in the figure. Each trapezoid in $Q(n)$ is subdivided into two triangles via a diagonal and mapped in a piecewise linear way to a corresponding square of $S(n)$. This map is uniformly quasiconformal, with distortion independent of $n$, because the angles of the triangles are uniformly bounded away from $0$. In this way we obtain a uniformly quasiconformal piecewise linear map from $Q(n)$ onto the slit rectangle $S(n)$.

\begin{figure}
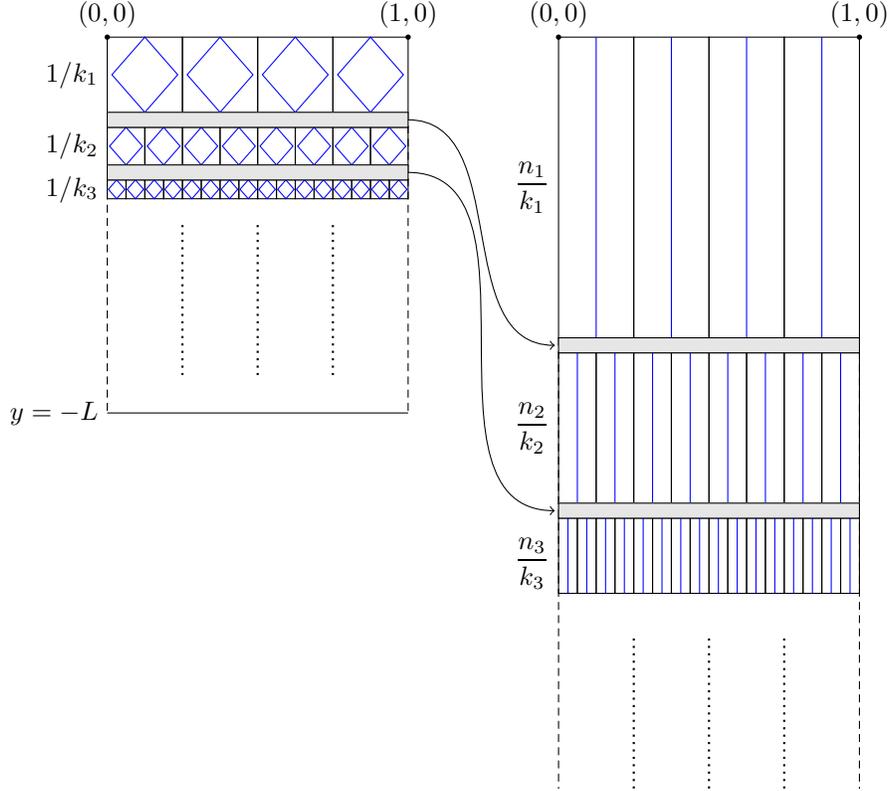

\centering
\begin{tikzpicture}[scale=1]

\input{rhombus_seed.tikz}

\begin{scope}[shift={(0,-0.7)}, scale=0.5]
\input{rhombus_seed.tikz}
\end{scope}

\begin{scope}[shift={(2,-0.7)}, scale=0.5]
\input{rhombus_seed.tikz}
\end{scope}

\begin{scope}[shift={(0,-1.15)}, scale=0.25]
\input{rhombus_seed.tikz}
\end{scope}

\begin{scope}[shift={(1,-1.15)}, scale=0.25]
\input{rhombus_seed.tikz}
\end{scope}

\begin{scope}[shift={(2,-1.15)}, scale=0.25]
\input{rhombus_seed.tikz}
\end{scope}

\begin{scope}[shift={(3,-1.15)}, scale=0.25]
\input{rhombus_seed.tikz}
\end{scope}

\draw (0,-4) node[anchor=east] {$y=-L$}--(4,-4);	
\draw[dotted, thick] (2,-1.5) --( 2, -3.5);	
\draw[dotted, thick] (1,-1.5) --( 1, -3.5);	
\draw[dotted, thick] (3,-1.5) --( 3, -3.5);	

\draw[densely dashed] (0,0) -- (0,-4);
\draw[densely dashed] (4,0)--(4,-4);

\fill (0,1) circle (1pt) node[anchor=south]  {$(0,0)$}; 
\fill (4,1) circle (1pt) node[anchor=south]  {$(1,0)$}; 

\node[anchor=east] at (0,0.5) {$1/k_1$};
\node[anchor=east] at (0,-.44) {$1/k_2$};
\node[anchor=east] at (0,-1.02) {$1/k_3$};


\begin{scope}[shift={(0, -3)}]

\begin{scope}[shift={(6,0)},scale=2]
\input{slit_seed.tikz}
\end{scope}

\begin{scope}[shift={(6,-2.2)},scale=1]
\input{slit_seed.tikz}
\end{scope}
\begin{scope}[shift={(8,-2.2)},scale=1]
\input{slit_seed.tikz}
\end{scope}

\begin{scope}[shift={(6,-3.4)},scale=.5]
\input{slit_seed.tikz}
\end{scope}
\begin{scope}[shift={(7,-3.4)},scale=.5]
\input{slit_seed.tikz}
\end{scope}
\begin{scope}[shift={(8,-3.4)},scale=.5]
\input{slit_seed.tikz}
\end{scope}
\begin{scope}[shift={(9,-3.4)},scale=.5]
\input{slit_seed.tikz}
\end{scope}

\draw[dotted, thick] (7,-4) --( 7, -6);	
\draw[dotted, thick] (8,-4) --( 8, -6);	
\draw[dotted, thick] (9,-4) --( 9, -6);	

\draw[densely dashed] (6,0) -- (6,-6);
\draw[densely dashed] (10,0)--(10,-6);

\fill (6,4) circle (1pt) node[anchor=south]  {$(0,0)$}; 
\fill (10,4) circle (1pt) node[anchor=south]  {$(1,0)$}; 

\node[anchor=east] at (6,2) {$\displaystyle{\frac{n_1}{k_1}}$};
\node[anchor=east] at (6,-1.2) {$\displaystyle{\frac{n_2}{k_2}}$};
\node[anchor=east] at (6,-3) {$\displaystyle{\frac{n_3}{k_3}}$};
\end{scope}

\draw[fill=black!10] (0,-0.2) rectangle (4,0);
\draw[fill=black!10] (0,-0.9) rectangle (4,-.7);
\draw[fill=black!10] (6,-3.2) rectangle (10,-3);
\draw[fill=black!10] (6,-5.4) rectangle (10,-5.2);

\draw[->] (4,-.1) to [out=0, in=180] ( 5.95,-3.1) ;
\draw[->] (4,-.8) to [out=0, in=180] ( 5.95,-5.3) ;

\end{tikzpicture}
\caption{The construction of the piecewise linear map $f$.}\label{figure:rhombus_map}
\end{figure}

We consider scaled and translated copies of the sets $Q(n)$ and $S(n)$, corresponding to different parameters $n$, and create a piecewise linear homeomorphism $f$ from a domain in $[0,1]\times (-L,\infty)$, namely, the complement of the rhombi, onto a domain inside $[0,1]\times (-L',\infty)$, namely, the complement of the slits, for some parameters $L,L'\in (0,\infty]$, as shown in Figure \ref{figure:rhombus_map}. Specifically, the map $f$ is set to be the identity on $[0,1]\times (0,\infty)$. The first row in the left consists of $k_1$ scaled copies of $Q(n_1)$ with height $1/k_1$ that are mapped to the first row in the right, which consists of $k_1$ scaled copies of slit rectangles $S(n_1)$. Note that the height of these rectangles is $n_1/k_1$. The map $f$ is linear on the entire bottom line of the first row. On the gray rectangle between the first and second rows we define $f$ to be a translation, mapping it to the corresponding rectangle of the same dimensions on the right side. The height of this gray rectangle, which serves as a transition zone, can be arbitrary, so we set it equal to a number $\delta_1\leq 1/k_1$. We proceed in the same way with the other rows.  Note that the map $f$ is quasiconformal in the complement of the rhombi. 

We require that 
\begin{align*}
\sum_{i=1}^\infty \frac{1}{k_i}<\infty \quad \textrm{and} \quad \sum_{i=1}^\infty \frac{n_i}{k_i}=\infty
\end{align*}
so that $L<\infty$ and $L'=\infty$. The map $f$ projects under $z\mapsto e^{-2\pi i z}$ to a quasiconformal map $F$ from a domain $U\subset \C$ outside a disk $B(0,e^{-2\pi L})$, bounded by countably many distorted rhombi  accumulating at that disk, onto a radial slit domain $V$, whose slits accumulate at the origin. 

We claim that the domain $U$ is cofat. This follows from the facts that the rhombi are uniformly fat and the map $z\mapsto e^{-2\pi i z}$ is conformal and distorts the rhombi in a controlled fashion by Koebe's distortion theorem. One can also argue directly here, by observing that for any two points $z,w$ with $|z-w|\leq 1/4$ one has
$$|e^{-2\pi i z} -e^{-2\pi i w}| \simeq e^{-2\pi i z} | z-w|$$ 
with uniform constants. This implies that any $\tau$-fat set of diameter less than $1/4$ is mapped by $z\mapsto e^{-2\pi i z}$ to a $c(\tau)$-fat set. Since $1/k_i\to 0$ as $i\to\infty$, we see that all but finitely many rhombi are small and the above estimate is true in them. On the finitely many large rhombi the map $z\mapsto e^{-2\pi i z}$ is bi-Lipschitz, so their images are also fat. 

Next, we ensure that the complementary components of $V$ have diameters in $\ell^2$. Observe that the slits at the $i$-th row have height $n_i/k_i$ and imaginary part smaller than $-\sum_{j=1}^{i-1} n_j/k_j \eqqcolon -a_{i-1}$, where $a_0=0$. Therefore, by the fundamental theorem of calculus, the image of each of them under $z\mapsto e^{-2\pi i z}$ has diameter bounded by $C e^{-2\pi a_{i-1} } n_i/k_i$, for some uniform $C>0$. The $\ell^2$ condition is implied by 
\begin{align*}
\sum_{i=1}^\infty e^{-4\pi a_{i-1}} \frac{n_i^2}{k_i^2}\cdot  k_i   <\infty. 
\end{align*}
We now choose $k_i=i^2$, $n_i=i$, so $a_i= \sum_{j=1}^i 1/j \geq \log (i+1)$. Then all above restrictions are satisfied. 

By the He--Schramm \cite{HeSchramm:Uniformization} uniformization theorem (countable Koebe's conjecture), there exists a conformal map from a circle domain onto $U$. Moreover, the cofatness of $U$ implies that all complementary components of the circle domain are non-degenerate, as shown by Schramm \cite[Theorem 4.2]{Schramm:transboundary}. Therefore, we may assume that $U$ is a circle domain and $F\colon U\to V$ is a quasiconformal map such that a circle in $\partial U$ corresponds to a point in $\partial V$.  Finally, by the work of Sibner \cite{Sibner:KoebeQC}, there exists a quasiconformal map $\phi\colon \widehat \C \to \widehat \C$ such that $\phi(U)$ is a circle domain and the map $F\circ \phi^{-1}$ is a conformal map from the circle domain $\phi(U)$ onto $V$, as desired.

\bibliography{biblio}
\end{document}